\newtheorem{thm}{Theorem}[section]
\newtheorem{lem}[thm]{Lemma}
\theoremstyle{definition}
\newtheorem{defn}[thm]{Definition}
\author{Michael D. Barrus\affiliationmark{1}}
\title[Weakly threshold graphs]{Weakly threshold graphs}
\affiliation{Department of Mathematics, University of Rhode Island, USA}
\keywords{threshold graph, Erd\H{o}s--Gallai inequality, degree sequence}
\begin{document}
\publicationdetails{20}{2018}{1}{15}{3968}
\maketitle
\begin{abstract}
We define a \emph{weakly threshold sequence} to be a degree sequence $d=(d_1,\dots,d_n)$ of a graph having the property that $\sum_{i \leq k} d_i \geq k(k-1)+\sum_{i > k} \min\{k,d_i\} - 1$ for all positive $k \leq \max\{i:d_i \geq i-1\}$. The \emph{weakly threshold graphs} are the realizations of the weakly threshold sequences. The weakly threshold graphs properly include the threshold graphs and satisfy pleasing extensions of many properties of threshold graphs. We demonstrate a majorization property of weakly threshold sequences and an iterative construction algorithm for weakly threshold graphs, as well as a forbidden induced subgraph characterization. We conclude by exactly enumerating weakly threshold sequences and graphs.
\end{abstract}

\section{Introduction} \label{sec: intro}

The threshold graphs are a remarkable and well-studied class of graphs. As explained in the monograph devoted to them by Mahadev and Peled~\cite{MahadevPeled95}, these graphs have been independently rediscovered in diverse contexts, and they have a large number of equivalent characterizations. For example, Chv\'{a}tal and Hammer~\cite{ChvatalHammer73,ChvatalHammer77} defined threshold graphs as those graphs whose vertices can be labeled with nonnegative numerical values so that a set of vertices is an independent set if and only if the values of the included vertices sum to at most some predetermined value (the ``threshold''). Other characterizations of threshold graphs have dealt with characteristics ranging from construction algorithms to forbidden induced subgraphs to eigenvalues of the Laplacian matrix; see~\cite{MahadevPeled95} for a broad introduction.

One characterization of threshold graphs, due to Hammer, Ibaraki, and Simeone, concerns their degree sequences, which we call \emph{threshold sequences}. In this and all other results in this paper, we assume that degree sequences are indexed with their terms ordered from largest to smallest. Given a degree sequence $d=(d_1,\dots,d_n)$, we further define $m(d) = \max\{i: d_i \geq i-1\}$.

\begin{thm}[\cite{HammerIbarakiSimeone78}] \label{thm: threshold deg seq char}
Let $d=(d_1,\dots,d_n)$ be the degree sequence of a graph $G$. The graph $G$ is a threshold graph if and only if \[\sum_{i=1}^k d_i = k(k-1) + \sum_{i>k} \min\{k,d_i\}\] for all $k \in \{1,\dots,m(d)\}$.
\end{thm}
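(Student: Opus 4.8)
The plan is to rest both implications on a single counting lemma that yields the Erd\H{o}s--Gallai inequality together with a description of exactly when it holds with equality. Fix a graph $G$ with degree sequence $d$, let $k$ be given, and let $S$ be a set consisting of $k$ vertices of largest degree. Counting the edge-endpoints incident to $S$ gives $\sum_{i=1}^k d_i = 2e(S) + e(S,\bar S)$, where $e(S)$ counts edges inside $S$ and $e(S,\bar S)$ counts edges from $S$ to its complement. Because $e(S) \le \binom{k}{2}$ and each vertex $v \notin S$ sends at most $\min\{k,\deg v\}$ edges into $S$, this yields $\sum_{i=1}^k d_i \le k(k-1) + \sum_{i>k} \min\{k,d_i\}$, with equality if and only if (a) $S$ induces a complete graph and (b) every vertex $v \notin S$ has exactly $\min\{k,\deg v\}$ neighbors in $S$. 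This equality characterization drives both directions.

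For the reverse implication, suppose equality holds for every $k \le m := m(d)$. The definition of $m$ gives $d_{m+1} < m$, so every vertex outside the top $m$ has degree less than $m$; condition (b) at $k=m$ then forces each such vertex to have all of its neighbors inside $S$, and with condition (a) this shows that the top $m$ vertices form a clique while the remaining vertices form an independent set, so $G$ is a split graph. Feeding the equality conditions at the smaller values of $k$ back in then determines the attachments exactly: a vertex of degree $t$ outside the clique is adjacent to precisely the $t$ highest-degree clique vertices, so the neighborhoods are linearly nested. Since the nested-neighborhood property (equivalently, the existence of a construction sequence of isolated and dominating vertices) is one of the standard characterizations of threshold graphs, $G$ is threshold.

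For the forward implication, suppose $G$ is threshold. Using its construction sequence, order the vertices by nonincreasing degree; the nesting of neighborhoods makes each higher-degree vertex dominate each lower-degree one, so the top $m(d)$ vertices form a clique and each vertex is joined to exactly an initial segment of the ordering. I then read off conditions (a) and (b) for every $k \le m(d)$: the top $k$ vertices form a clique, giving (a), and the nesting makes each remaining vertex $v$ adjacent to exactly $\min\{k,\deg v\}$ of them, giving (b). By the equality characterization the claimed identity holds. It is convenient here to confirm that $m(d)$ equals the clique number of $G$, which explains why $k = m(d)$ is the right cutoff.

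I expect the main difficulties to be bookkeeping rather than conceptual. The delicate points are handling ties in the degree ordering so that the set $S$ and the phrase ``the top $k$ vertices'' are unambiguous, pinning down the exact relationship between $m(d)$ and the clique boundary so that the range $k \le m(d)$ is justified in both directions, and, in the reverse implication, upgrading the split-plus-nesting structure to a clean conclusion that $G$ is threshold (for instance by excluding an induced $P_4$, the one forbidden configuration that a split graph may still contain). Once the equality case of the counting lemma is stated carefully, each of these steps should become routine.
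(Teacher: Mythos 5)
The paper never proves this statement: it is quoted as background (Theorem 1.1) with a citation to Hammer, Ibaraki, and Simeone, so there is no in-paper argument to compare yours against. Judged on its own, your proposal is correct and the flagged ``bookkeeping'' points are genuinely routine. The counting lemma and its equality analysis are right, and since both sides of the inequality depend only on $d$, equality at index $k$ forces conditions (a) and (b) for \emph{every} choice of a top-$k$ set; so you may fix one degree ordering once and for all and work with the nested family $S_1 \subseteq S_2 \subseteq \dots \subseteq S_m$, which disposes of the tie-breaking worry. In the reverse direction your argument does pin everything down: $d_{m+1} < m$ makes condition (b) at $k=m$ read $N(v) \subseteq S_m$ for $v \notin S_m$, giving the split partition, and then condition (b) at $k = t = \deg v$ gives $|N(v) \cap S_t| = t = |S_t| = |N(v)|$, hence $N(v) = S_t$ exactly; a graph with this structure is threshold either because it is split and $P_4$-free (an induced $P_4$ in a split graph must have both endpoints in the independent set, and their neighborhoods are incomparable, contradicting nesting), or by peeling directly: $v_1$ is dominating unless there is an isolated vertex to remove. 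The forward direction is where the real residual work lives: from a construction sequence one should check that every dominating-type vertex has degree at least that of every isolated-type vertex, that $m(d)$ exceeds the number of dominating-type vertices by at most one, and that in the boundary case the one extra vertex of $S_m$ is adjacent to the whole clique; with those checks, (a) and (b) follow as you assert (alternatively, this direction goes faster by induction on the construction sequence, verifying the identity directly when an isolated or a dominating vertex is added, since the right-hand side changes in an easily tracked way). Finally, note that your counting lemma is the paper's Lemma 2.2 in disguise: $\Delta_k(d) = B_k(d) - R_k(d)$ is the Ferrers-diagram form of the same edge count, and your equality analysis corresponds to the symmetry characterization in Theorem 2.1(ii), which the paper likewise cites rather than proves.
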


This theorem bears a strong resemblance to a well known theorem of Erd\H{o}s and Gallai characterizing graphic sequences. (The version stated here uses an improvement due to Hammer, Ibaraki, and Simeone.)
\begin{thm}[\cite{ErdosGallai60, HammerIbarakiSimeone78, HammerIbarakiSimeone81}] \label{thm: Erdos Gallai}
A sequence $d=(d_1,\dots,d_n)$ of nonnegative integers, with even sum and terms in nonincreasing order, is the degree sequence of a simple graph if and only if 
\begin{equation}\label{eq: Erdos Gallai}\sum_{i=1}^k d_i \leq k(k-1) + \sum_{i>k} \min\{k,d_i\}\end{equation} for all $k \in \{1,\dots,m(d)\}$.
\end{thm}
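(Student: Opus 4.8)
The two directions are of very different character, so the plan is to treat them separately.

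The necessity direction is a direct double-counting argument. Suppose $G$ realizes $d$, and order the vertices $v_1,\dots,v_n$ so that $\deg(v_i)=d_i$. Fix $k$ and split the edges incident to $V_k := \{v_1,\dots,v_k\}$ into those with both ends in $V_k$ and those with exactly one end in $V_k$. Then $\sum_{i\le k} d_i$ counts the first kind twice and the second kind once, so $\sum_{i\le k} d_i = 2e(V_k) + e(V_k,\overline{V_k})$. Bounding $e(V_k)\le\binom{k}{2}$ produces the term $k(k-1)$, while bounding the contribution of each $v_i$ with $i>k$ by $\min\{k,d_i\}$ (it has at most $k$ neighbors inside $V_k$ and at most $d_i$ neighbors in total) produces $\sum_{i>k}\min\{k,d_i\}$. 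Summing yields inequality~\eqref{eq: Erdos Gallai}; since it holds for every $k$, it holds in particular for $k\le m(d)$.

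The sufficiency direction is the substance of the theorem, and I would prove it by induction on the edge count $\frac{1}{2}\sum_i d_i$. After deleting trailing zero terms (which changes neither side of~\eqref{eq: Erdos Gallai}), I would perform a reduction that decrements two entries, re-sorts, and invokes the inductive hypothesis on the smaller sequence; the resulting realization is then augmented by a single edge to realize $d$. The cleanest version (following Choudum) decrements the final term $d_n$ together with a carefully chosen larger term, which keeps the sum even and lowers the edge count by one. Two points carry the real difficulty. First, I must verify that the reduced sequence $d'$ still satisfies~\eqref{eq: Erdos Gallai}; this is the technical heart, and it amounts to comparing $\sum_{i\le k} d'_i$ with $k(k-1)+\sum_{i>k}\min\{k,d'_i\}$ against their counterparts for $d$, case-splitting on where the decremented entries fall relative to $k$. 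Second, when I reattach the deleted edge I need the two relevant vertices to be non-adjacent in the realization produced by induction; guaranteeing this requires choosing which entries to decrement so that the endpoints are forced apart (or, failing a direct argument, performing a standard edge-swap to separate them). I expect this non-adjacency bookkeeping, intertwined with the inequality preservation, to absorb essentially all the effort.

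Finally, to match the stated range $k\in\{1,\dots,m(d)\}$ I would record the Hammer--Ibaraki--Simeone observation that~\eqref{eq: Erdos Gallai} for $k\le m(d)$ already forces it for all $k\le n$. Writing $\delta(k)$ for the right side minus the left side, one notes that for $k\ge m(d)$ every remaining term satisfies $\min\{k,d_i\}=d_i$ (since $d_i<k$ once $i>k\ge m(d)$), whence $\delta(k+1)-\delta(k)=2(k-d_{k+1})>0$; thus $\delta$ is increasing past $m(d)$, and $\delta(m(d))\ge 0$ yields $\delta(k)\ge 0$ for the remaining values of $k$. This lets the induction operate with the full strength of the all-$k$ inequalities while only the restricted range is assumed as hypothesis.
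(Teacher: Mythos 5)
Note first that the paper does not prove this theorem at all: it is quoted with citations to Erd\H{o}s--Gallai and Hammer--Ibaraki--Simeone, so your proposal has to stand entirely on its own. Two of your three components do stand. The necessity argument (writing $\sum_{i\le k} d_i = 2e(V_k) + e(V_k,\overline{V_k})$ and bounding the two terms by $k(k-1)$ and $\sum_{i>k}\min\{k,d_i\}$) is the standard double count and is complete. The reduction of the range to $k \le m(d)$ is also correct: for $k \ge m(d)$ and $i > k$ one has $d_i \le d_{k+1} < k$, so every minimum collapses to $d_i$ and $\delta(k+1)-\delta(k) = 2(k-d_{k+1}) > 0$ as you claim. (Your parenthetical justification ``$d_i < k$ once $i>k\ge m(d)$'' silently uses monotonicity: $d_i < i-1$ alone does not give $d_i < k$; you need $d_i \le d_{k+1}$ together with $d_{k+1} < k$.)

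The genuine gap is the sufficiency direction, which you yourself call the substance of the theorem and then do not prove. What you give is a plan: you name Choudum's induction on $\tfrac12\sum_i d_i$, you correctly isolate the two steps on which the entire proof rests --- (a) verifying that after decrementing $d_n$ and a suitably chosen larger entry the reduced sequence $d'$ still satisfies \eqref{eq: Erdos Gallai}, and (b) arranging that some realization of $d'$ leaves the two affected vertices non-adjacent so the deleted edge can be restored --- and then you explicitly defer both (``I expect this non-adjacency bookkeeping, intertwined with the inequality preservation, to absorb essentially all the effort''). Neither step is routine, and (a) in particular is where the theorem lives: when $k$ lies below both decremented indices, the left side of \eqref{eq: Erdos Gallai} is unchanged while the right side can drop by $2$, so in tight cases ($\Delta_k(d)\in\{0,1\}$) one must show via the choice of decremented index and a parity/tightness analysis that this configuration cannot occur; that case analysis is precisely the content of Choudum's paper. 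Step (b) likewise needs an explicit $2$-switch argument, not a remark that one exists. The route you describe is a known valid one and would succeed if executed, but as written your proposal establishes only the easy direction and the range reduction; the hard implication remains unproven.
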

Thus Theorem~\ref{thm: threshold deg seq char} shows that threshold sequences are in one sense extremal examples among all degree sequences.

The Erd\H{o}s--Gallai inequalities of Theorem~\ref{thm: Erdos Gallai} are derived from the observation that the number of edges joining vertices with large degree to vertices of low degree cannot exceed the capacity of the low-degree vertices to accommodate these edges. As we might expect, in order for threshold sequences to satisfy these inequalities with equality, the adjacencies in a threshold graph are rigidly determined. In fact, one of the remarkable properties of threshold sequences is that each such sequence has exactly one labeled realization, and threshold sequences are the only degree sequences with this property~\cite{ChvatalHammer73, FulkersonHoffmanMcAndrew65}.

Stated another way, in a threshold graph the presence or absence of an edge between two vertices is uniquely determined by the degrees of those two vertices. In a recent paper~\cite{ForcedEdges}, the author characterized the circumstances under which an edge (or non-edge) is forced to appear in all realizations of a degree sequence. The answer can be stated in terms of the quantities
\begin{equation} \label{eq: EG differences}
\Delta_k(d) = k(k-1) + \sum_{i>k} \min\{k,d_i\} - \sum_{i=1}^k d_i
\end{equation}
for $1 \leq k \leq m(d)$, which we call the \emph{Erd\H{o}s--Gallai differences} of $d$. By Theorem~\ref{thm: Erdos Gallai} the Erd\H{o}s--Gallai differences are all nonnegative for any degree sequence. As shown in~\cite{ForcedEdges}, in order for an adjacency relationship to be constant among all labeled realizations of a degree sequence, it is necessary that an Erd\H{o}s--Gallai difference be at most 1.

Because of Theorem~\ref{thm: threshold deg seq char}, threshold sequences are precisely those degree sequences where all of the first $m(d)$ Erd\H{o}s--Gallai differences are 0. It is perhaps natural to wonder, though, what properties of threshold graphs may continue to hold in a more general form if this condition is relaxed somewhat. In light of the significance of Erd\H{o}s--Gallai differences of 1, at least in the degree sequence problem of~\cite{ForcedEdges}, we make a definition.
\begin{defn}
A degree sequence $d$ is a \emph{weakly threshold sequence} if for all $k \in \{1,\dots,m(d)\}$ we have $\Delta_k(d) \leq 1$. If such is the case, then every realization of $d$ is called a \emph{weakly threshold graph}.
\end{defn}
Since the four-vertex path is a weakly threshold graph but not a threshold graph, the class of weakly threshold graphs properly contains the class of threshold graphs.

In this paper, we review several characterizations of threshold graphs and show that for most of them, a more general property holds for weakly threshold graphs. In Section~\ref{sec: majorization} we establish some preliminary results on Erd\H{o}s--Gallai differences and show that, as for threshold graphs and threshold sequences, the weakly threshold graphs are split graphs, and the weakly threshold sequences have nearly symmetric Ferrers diagrams and appear at the top of the majorization order on degree sequences. In Section~\ref{sec: iter const} we examine iterative constructions of threshold and weakly threshold graphs. In Section~\ref{sec: forb subgr} we show that the weakly threshold graphs form a hereditary graph class and characterize them in terms of forbidden induced subgraphs; we see that weakly threshold graphs form a notable subclass of both the interval graphs and their complements. In Section~\ref{sec: enum} we enumerate the weakly threshold sequences and graphs and compare these numbers to those of the threshold graphs.

Throughout the paper, we will use $K_n$, $P_n$, and $C_n$, respectively, to denote the complete graph, the path, and the cycle with $n$ vertices. We use $V(G)$ to denote the vertex set of a graph $G$. The \emph{open neighborhood} of a vertex $v$ is the set of vertices adjacent to $v$; the \emph{closed neighborhood} of $v$ is the union of $\{v\}$ and the open neighborhood of $v$. Other terms and notation will be defined as they are encountered.

\section{Preliminaries and majorization} \label{sec: majorization}

In this section we focus on weakly threshold sequences, showing that they satisfy approximate versions of the Ferrers diagram symmetry and majorization properties of threshold sequences, which we describe below. Along the way we will also show that every weakly threshold graph is a split graph.

In discussing graph degree sequences, it has often proved useful to associate with a list $d$ of nonnegative integers its \emph{corrected Ferrers diagram} $C(d)$ (see, for example, the monograph~\cite{MahadevPeled95}, from which we adapt our notation and presentation). Assuming that $d=(d_1,\dots,d_n)$ and that the terms of $d$ are nonincreasing, we define $C(d)$ to be the $n\times n$ matrix with entries drawn from $\{0,1,\star\}$ such that the entries on the main diagonal all equal $\star$, and for each $i \in \{1,\dots,n\}$, the leftmost $d_i$ entries not on the main diagonal are equal to 1, with the remaining entries in the row each equaling 0.

Recall now our definition of $m(d)$, the \emph{corrected Durfee number of $d$}, from the previous section: \[m(d) = \max\{i:d_i \geq i-1\}.\] Pictorially, $m(d)$ represents the side length (measured in entries) of the largest square containing no 0 that occupies the top left corner of $C(d)$. (This square is called the \emph{corrected Durfee square} of $C(d)$.) As an example, in Figure~\ref{fig: C(11110) and C(2211)} we exhibit $C(s)$ and $C(s')$, where $s=(1,1,1,1,0)$ and $s'=(2,2,1,1)$; we see that $m(s) = m(s')=2$.
\begin{figure}
\centering
$\displaystyle
\begin{bmatrix}
\star & 1 & 0 & 0 & 0\\
1 & \star & 0 & 0 & 0\\
1 & 0 & \star & 0 & 0\\
1 & 0 & 0 & \star & 0\\
0 & 0 & 0 & 0 & \star
\end{bmatrix}
\qquad \qquad
\begin{bmatrix}
\star & 1 & 1 & 0\\
1 & \star & 1 & 0\\
1 & 0 & \star & 0\\
1 & 0 & 0 & \star
\end{bmatrix}
$
\caption{The corrected Ferrers diagrams of $(1,1,1,1,0)$ and $(2,2,1,1)$}
\label{fig: C(11110) and C(2211)}
\end{figure}

Given two lists $p=(p_1,\dots,p_j)$ and $q=(q_1,\dots,q_k)$ of nonnegative integers, we say that \emph{$p$ majorizes $q$}, and we write $p \succeq q$, if $\sum_{i=1}^j p_i = \sum_{i=1}^k q_i$ and if for each positive integer $l$, $\sum_{i=1}^l p_i \geq \sum_{i=1}^l q_i$ (where undefined sequence terms are assumed to be 0). 

It is well known that the partitions of any fixed nonnegative integer form a poset under the relation $\succeq$. Furthermore, if $p$ and $q$ are lists of positive integers (ignoring any 0's) that have the same sum, $p$ is a graph degree sequence, and $p \succeq q$, then $q$ is a degree sequence of a graph as well.

Threshold sequences have characterizations in terms of corrected Ferrers diagrams and majorization, as the following theorem shows. We will see shortly that relaxed versions of these statements hold for weakly threshold sequences.

\begin{thm}[see~{\cite[Theorem 3.2.2]{MahadevPeled95}}]\label{thm: deg seq props of threshold graphs}
Let $d$ be a degree sequence. The following are equivalent.
\begin{enumerate}
\item[\textup{(i)}] The sequence $d$ is a threshold sequence.
\item[\textup{(ii)}] The corrected Ferrers diagram $C(d)$ is a symmetric matrix.
\item[\textup{(iii)}] If $e$ is a degree sequence and $e \succeq d$, then $d = e$.
\end{enumerate}
\end{thm}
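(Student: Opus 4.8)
My plan is to route everything through a single geometric reading of the Erd\H{o}s--Gallai differences inside the corrected Ferrers diagram. The first step is a \emph{counting lemma}: for every degree sequence $d$ and every $k \in \{1,\dots,m(d)\}$, the difference $\Delta_k(d)$ equals the number of $1$'s in the first $k$ columns of $C(d)$ minus the number of $1$'s in the first $k$ rows of $C(d)$. This follows by direct inspection: the first $k$ rows contain $\sum_{i\le k} d_i$ ones, while among the first $k$ columns the top-left $k\times k$ block contributes $k(k-1)$ ones (all off-diagonal entries there equal $1$, since $k\le m(d)$ places this block inside the corrected Durfee square) and each row $i>k$ contributes $\min\{k,d_i\}$ ones. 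I will also record the elementary structural fact that $d_i \le m(d)-1$ for every $i>m(d)$ (because $d_{m(d)+1}<m(d)$ and $d$ is nonincreasing); consequently every row indexed $i>m(d)$ has its $1$'s confined to columns $\le m(d)-1$, so the off-diagonal block of $C(d)$ in rows and columns $>m(d)$ is identically $0$.

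The implication (ii)$\Rightarrow$(i) is then immediate: if $C(d)$ is symmetric, reflecting across the diagonal matches the first $k$ rows with the first $k$ columns, so the counting lemma gives $\Delta_k(d)=0$ for all $k\le m(d)$, and Theorem~\ref{thm: threshold deg seq char} shows $d$ is a threshold sequence. For (i)$\Rightarrow$(ii) I write $d_k^{\ast}$ for the number of $1$'s in column $k$; telescoping the counting lemma gives $\Delta_k(d)-\Delta_{k-1}(d)=d_k^{\ast}-d_k$, so $\Delta_k(d)=0$ for $k\le m(d)$ forces $d_k=d_k^{\ast}$ there. I then split the off-diagonal cells of $C(d)$ into the corrected Durfee square (all $1$'s, hence symmetric), the block in rows and columns $>m(d)$ (all $0$'s by the structural fact, hence symmetric), and the two ``corners'' $U$ (rows $\le m(d)$, columns $>m(d)$) and $L$ (rows $>m(d)$, columns $\le m(d)$). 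Both $U$ and $L$ are staircase (Young) shapes, and the equalities $d_k=d_k^{\ast}$ say precisely that the $k$-th row length of $U$ equals the $k$-th column length of $L$. Since two Young diagrams with equal column-length sequences coincide, this forces $U$ to be the transpose of $L$, i.e.\ $C_{kj}=C_{jk}$ for $k\le m(d)<j$, which completes the symmetry of $C(d)$.

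To connect with majorization I will prove a monotonicity lemma: if $e\succeq d$, then $\sum_{i>k}\min\{k,e_i\}\le\sum_{i>k}\min\{k,d_i\}$ for every $k$. Because the dominance order is generated by elementary transfers moving one unit from a later term to an earlier one, it suffices to verify this under a single transfer, a short case analysis on whether the two affected indices lie above or below $k$. Granting this, (i)$\Rightarrow$(iii) runs as follows: for a threshold sequence $d$ and any graphic $e$ with $e\succeq d$ and equal sum, the elementary upper bound $\sum_{i\le k}e_i\le k(k-1)+\sum_{i>k}\min\{k,e_i\}$ (valid for all $k$ by counting edges incident to the top $k$ vertices) combines with the threshold equality for $d$ and the monotonicity lemma to give $\sum_{i\le k}e_i\le\sum_{i\le k}d_i$ for $k\le m(d)$; together with $e\succeq d$ this forces equality of all partial sums through $m(d)$, so $e_i=d_i$ for $i\le m(d)$, and the remaining terms must agree because, by the symmetric diagram of part~(ii), the terms of a threshold sequence beyond $m(d)$ are determined by those up to $m(d)$. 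Hence $e=d$.

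It remains to prove (iii)$\Rightarrow$(i), which I will do by contraposition and which I expect to be the main obstacle. If $d$ is not a threshold sequence, then $\Delta_{k_0}(d)\ge 1$ for some $k_0\le m(d)$, and I will exhibit a graphic sequence $e$ strictly majorizing $d$ via a single unit transfer: choose indices $a\le k_0<b$ with $d_a>d_b$ and set $e_a=d_a+1$, $e_b=d_b-1$ while keeping the list nonincreasing. Since mass moves from a later to an earlier position, $e\succ d$ is automatic; the delicate point is checking that $e$ is again graphic, which I will do through the Erd\H{o}s--Gallai inequalities, using exactly the slack $\Delta_{k_0}(d)\ge1$ to absorb the effect of raising $d_a$. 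Confirming that this slack propagates correctly to every inequality for $e$, and selecting $a,b$ so that the nonincreasing order is preserved, is the step demanding the most care. With all four implications established, (i), (ii), and (iii) are equivalent.
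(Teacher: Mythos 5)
The paper does not actually prove this theorem --- it cites it as \cite[Theorem 3.2.2]{MahadevPeled95}, and its only in-text contribution is Lemma~\ref{lem: EG diff via diagram} plus the remark following it, which together are exactly your counting lemma and your (i)$\Leftrightarrow$(ii) argument; that part of your proposal is correct, as are your monotonicity lemma and the deduction that $e_i = d_i$ for all $i \le m(d)$ in (i)$\Rightarrow$(iii). However, the final step of (i)$\Rightarrow$(iii) is a non sequitur as written: the principle ``the terms of a threshold sequence beyond $m(d)$ are determined by those up to $m(d)$'' applies to $d$, not to $e$ --- you do not yet know $e$ is threshold, and equal heads plus equal total sums do not force equal tails. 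This gap is repairable with material you already have: your chain of inequalities is tight at every $k \le m(d)$, so $\sum_{i>k}\min\{k,e_i\} = \sum_{i>k}\min\{k,d_i\}$; cancelling the (equal) terms with $k < i \le m(d)$, these become partial sums of the conjugates of the two tails, forcing the conjugates (hence the tails) to coincide, using $d_i \le m(d)-1$ for $i > m(d)$ to see that neither tail has a part of size $\ge m(d)$.

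The serious gap is (iii)$\Rightarrow$(i), which you concede is only a plan --- and the plan's recipe is in fact false. Take $d=(4,3,3,2,2)$, the degree sequence of $P_4$ plus a dominating vertex: here $\Delta_1(d)=\Delta_3(d)=0$ and $\Delta_2(d)=1$, so $k_0=2$ is forced. Every transfer with $a \le 2 < b$ and $d_a > d_b$ yields, after sorting, either a sequence beginning with $5$ (not graphic on five vertices) or $(4,4,3,2,1)$ (not graphic: two dominating vertices force minimum degree $2$, contradicting the term $1$). The transfers that do work --- e.g.\ $a=2$, $b=3$, giving the threshold sequence $(4,4,2,2,2)$ --- violate your requirement $d_a > d_b$. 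The underlying reason is visible from the computation $\Delta_k(e) = \Delta_k(d) - 1$ if $d_b > k$, and $\Delta_k(d)-2$ if $d_b \le k$, valid for every $k$ with $a \le k < b$: you need slack at \emph{every} index in $[a,b)$, not just at $k_0$, so the existence of a valid pair $(a,b)$ is a genuine claim requiring proof, and it is precisely the step you deferred. A cleaner route avoids Erd\H{o}s--Gallai verification altogether: take any realization $G$ of $d$; since $d$ is not a threshold sequence, $G$ is not a threshold graph by Theorem~\ref{thm: threshold deg seq char}, so its neighborhoods are not nested, i.e., there exist vertices $u,v$ with $\deg(u) \ge \deg(v)$ and a vertex $w \ne u$ adjacent to $v$ but not to $u$. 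Deleting the edge $vw$ and adding $uw$ produces a graph whose degree sequence strictly majorizes $d$ and is graphic by construction.
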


In order to describe the corrected Ferrers diagrams of weakly threshold sequences, we first give a pictorial interpretation of the Erd\H{o}s--Gallai differences.

\begin{lem}\label{lem: EG diff via diagram}
Let $d$ be a degree sequence. For all $k \in \{1,\dots,m(d)\}$, the Erd\H{o}s--Gallai difference $\Delta_k(d)$ equals $B_k(d)-R_k(d)$, where $B_k(d)$ is the number of 1's in the first $k$ columns of $C(d)$ that lie below the diagonal of stars, and $R_k(d)$ is the number of 1's in the first $k$ rows of $C(d)$ lying to the right of diagonal of stars.
\end{lem}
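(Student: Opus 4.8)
The plan is to evaluate $R_k(d)$ and $B_k(d)$ separately as explicit functions of the $d_i$, and then to check that their difference reproduces the defining expression~\eqref{eq: EG differences} for $\Delta_k(d)$. Everything rests on one monotonicity observation: for $k \le m(d)$ we have $d_i \ge i-1$ for every $i \le k$. Indeed, since $d$ is nonincreasing we have $d_k \ge d_{m(d)}$, while the definition of $m(d)$ gives $d_{m(d)} \ge m(d)-1$; as $m(d) \ge k$, this yields $d_k \ge k-1$, and hence $d_i \ge d_k \ge k-1 \ge i-1$ for each $i \le k$. In particular, in each of the first $k$ rows the $1$'s to the left of the diagonal fill all of columns $1,\dots,i-1$.

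First I would compute $R_k(d)$. In row $i$ the $1$'s lying to the left of the diagonal occupy columns $1,\dots,\min\{d_i,i-1\}$, and by the observation above this count equals $i-1$ whenever $i \le k$. Since row $i$ contains $d_i$ ones in all, exactly $d_i-(i-1)$ of them lie to the right of the diagonal. Summing over the first $k$ rows gives
\[
R_k(d) = \sum_{i=1}^k \bigl(d_i-(i-1)\bigr) = \sum_{i=1}^k d_i - \binom{k}{2}.
\]

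Next I would compute $B_k(d)$ by splitting the below-diagonal $1$'s of the first $k$ columns according to whether they lie in the top-left $k\times k$ block or in a row indexed by some $i>k$. For the block, every cell $(i,j)$ with $j<i\le k$ satisfies $d_i \ge i-1 \ge j$, so by the description of $C(d)$ the entry there is a $1$; thus all $\binom{k}{2}$ below-diagonal cells of the block are filled. For a row $i>k$ the diagonal star sits in column $i>k$, so the first $k$ columns consist entirely of off-diagonal positions and hold precisely the $\min\{k,d_i\}$ leftmost $1$'s of that row. Hence
\[
B_k(d) = \binom{k}{2} + \sum_{i>k}\min\{k,d_i\}.
\]

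Finally, subtracting gives $B_k(d)-R_k(d) = 2\binom{k}{2} + \sum_{i>k}\min\{k,d_i\} - \sum_{i=1}^k d_i$, and since $2\binom{k}{2}=k(k-1)$ this is exactly $\Delta_k(d)$ from~\eqref{eq: EG differences}. I expect the only delicate point to be the block-filling claim used for $B_k(d)$: it is precisely there that the hypothesis $k \le m(d)$ is indispensable, since for $k>m(d)$ some below-diagonal cell of the top-left block would be a $0$ and the clean formula would break down.
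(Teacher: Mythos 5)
Your proof is correct and takes exactly the paper's approach: the paper's own proof merely asserts the two counting formulas $R_k(d)=\sum_{i=1}^k d_i - k(k-1)/2$ and $B_k(d)=k(k-1)/2+\sum_{i>k}\min\{k,d_i\}$ and subtracts, whereas you justify them carefully via the observation that $d_i\ge i-1$ for all $i\le k\le m(d)$. One small quibble with your closing remark: the hypothesis $k\le m(d)$ is equally indispensable for the $R_k(d)$ formula, not only for the block-filling claim in $B_k(d)$, since a row with $d_i<i-1$ would contribute $0$ rather than $d_i-(i-1)$ ones to the right of the diagonal.
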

\begin{proof}
Fix $k \in \{1,\dots,m(d)\}$, and observe that $R_k(d)$ equals $\sum_{i=1}^k d_i - k(k-1)/2$. Further note that $B_k(d)$ equals $k(k-1)/2 + \sum_{i>k} \min\{k,d_i\}$. Subtracting $R_k(d)$ from $B_k(d)$ yields $\Delta_k(d)$.
\end{proof}

Lemma~\ref{lem: EG diff via diagram}, together with Theorem~\ref{thm: threshold deg seq char}, links statements (i) and (ii) in Theorem~\ref{thm: deg seq props of threshold graphs} when we observe that each $1$ in $C(d)$ lies in one of the first $m(d)$ rows or one of the first $m(d)$ columns of the diagram. Note that a degree sequence $d$ is a threshold sequence if and only if each star in $C(d)$ has an equal number of 1's below it and to the right of it.

Lemma~\ref{lem: EG diff via diagram} provides us with a similar statement about $C(d)$ when $d$ is a weakly threshold sequence; in this case, the numbers of stars 1's and to the right of each star can differ by at most 1, so $C(d)$ is ``almost symmetric.'' Futhermore, the instances where the numbers do differ for a given star are constrained by the facts that $0 \leq \Delta_k(d) \leq 1$ for all $k \in \{1,\dots,m(d)\}$; for instance, if the numbers do differ for two stars among the first $m(d)$ stars in $C(d)$, and they do not differ for any stars between these two, then it follows from Lemma~\ref{lem: EG diff via diagram} that one of these two stars must have one more 1 below it than to the right of it, and the other star must have the opposite situation.

Before discussing an analogue for statement (iii) in Theorem~\ref{thm: deg seq props of threshold graphs}, we mention another class of graphs with a degree sequence characterization. A graph is \emph{split} if its vertex set can be partitioned into (possibly empty) sets, where one is a clique and the other is an independent set. As shown in~\cite{HammerSimeone81}, a degree sequence $d$ is the degree sequence of a split graph if and only if $\Delta_m(d)=0$, where $m=m(d)$. It follows from Theorem~\ref{thm: threshold deg seq char} that threshold graphs are split graphs. We now consider weakly threshold graphs.

\begin{lem} \label{lem: Delta_m is even}
If $d$ is a degree sequence and $m=m(d)$, then $\Delta_m(d)$ is an even number.
\end{lem}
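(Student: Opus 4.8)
The plan is to work directly from the definition of the Erd\H{o}s--Gallai difference in~\eqref{eq: EG differences} and reduce the claim to the handshake lemma. Writing $m = m(d)$, the quantity to analyze is
\[
\Delta_m(d) = m(m-1) + \sum_{i>m} \min\{m,d_i\} - \sum_{i=1}^m d_i.
\]
The one feature that makes $k = m$ special is that the truncation $\min\{m,d_i\}$ collapses for every index past $m$, and recognizing this is the crux of the argument.

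Concretely, the first step is to check that $\min\{m,d_i\} = d_i$ for all $i > m$. Since $m = \max\{i : d_i \geq i-1\}$, the index $m+1$ fails the defining inequality, so $d_{m+1} < (m+1)-1 = m$; because the sequence is nonincreasing, $d_i \leq d_{m+1} < m$ for every $i > m$, giving $\min\{m,d_i\} = d_i$ as claimed. (When $m = n$ the sum over $i > m$ is empty and there is nothing to check.)

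With this substitution, the second step is to fold the two sums together using $\sum_{i>m} d_i = \sum_{i=1}^n d_i - \sum_{i=1}^m d_i$, which turns the expression into
\[
\Delta_m(d) = m(m-1) + \sum_{i=1}^n d_i - 2\sum_{i=1}^m d_i.
\]
Now each summand on the right is visibly even: $m(m-1)$ is a product of consecutive integers, $\sum_{i=1}^n d_i$ is the total degree of a graph and hence even by the handshake lemma, and $2\sum_{i=1}^m d_i$ carries an explicit factor of $2$. Their combination $\Delta_m(d)$ is therefore even.

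I do not anticipate a genuine obstacle here; the proof is a short calculation, and the only point requiring care is the verification that the truncation is inactive beyond index $m$, which is exactly where the definition of the corrected Durfee number $m(d)$ is used. An alternative route through Lemma~\ref{lem: EG diff via diagram}, counting the $1$'s of $C(d)$ on either side of the diagonal, is available but less direct, since the evenness is cleanest to see once everything has been re-expressed in terms of the (even) total degree sum.
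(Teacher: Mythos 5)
Your proof is correct. It differs from the paper's in route, though not in underlying substance: the paper invokes Lemma~\ref{lem: EG diff via diagram}, writing $\Delta_m(d)=B_m(d)-R_m(d)$ and observing that every $1$ in $C(d)$ lies in the first $m$ rows or the first $m$ columns, so that $B_m(d)+R_m(d)=\sum_i d_i$ is even by the handshake lemma, whence the difference $B_m(d)-R_m(d)$ has the same (even) parity. You instead work directly from the definition~\eqref{eq: EG differences}, noting that the truncation $\min\{m,d_i\}$ is inactive for $i>m$ (since $d_{m+1}\le m-1$ and the sequence is nonincreasing), which collapses $\Delta_m(d)$ to $m(m-1)+\sum_{i=1}^n d_i-2\sum_{i=1}^m d_i$, a sum of three even terms. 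The two arguments rest on the same two facts---the evenness of the total degree sum and the fact that all degrees past index $m$ are less than $m$ (pictorially: no $1$ of $C(d)$ lies outside the first $m$ rows and columns)---but yours is self-contained and purely algebraic, requiring no auxiliary lemma, while the paper's reuses the diagram machinery it has already established, which makes the parity claim visible at a glance once Lemma~\ref{lem: EG diff via diagram} is in hand. Your handling of the boundary case $m=n$ (empty sum over $i>m$) is also correct and worth keeping explicit.
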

\begin{proof}
Since each $1$ in $C(d)$ lies in one of the first $m(d)$ rows or one of the first $m(d)$ columns of the diagram, and the sum of the terms of $d$ is an even number (the sum is twice the number of edges), it follows from Lemma~\ref{lem: EG diff via diagram} that $\Delta_m(d)$ is also even.
\end{proof}

\begin{thm} \label{thm: WT graphs are split}
Every weakly threshold graph is a split graph.
\end{thm}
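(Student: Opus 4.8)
The plan is to combine the split-graph degree-sequence criterion of Hammer and Simeone with the parity result just established. Recall that a degree sequence $d$ is that of a split graph if and only if $\Delta_m(d)=0$, where $m=m(d)$. By the definition of a weakly threshold sequence, we already know $\Delta_m(d) \leq 1$, and since the Erd\H{o}s--Gallai differences are always nonnegative (by Theorem~\ref{thm: Erdos Gallai}), we have $0 \leq \Delta_m(d) \leq 1$.

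The key observation is that Lemma~\ref{lem: Delta_m is even} forces $\Delta_m(d)$ to be an even integer. The only even integer in the range $\{0,1\}$ is $0$, so it follows immediately that $\Delta_m(d)=0$. Applying the Hammer--Simeone criterion then shows that $d$ is the degree sequence of a split graph.

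To finish, I would note the subtlety that the split property is a property of the \emph{graph}, whereas the argument above concerns the degree \emph{sequence}. This is resolved by the theorem of Hammer and Simeone, which guarantees that $\Delta_m(d)=0$ characterizes precisely those sequences \emph{all} of whose realizations are split graphs; equivalently, the split property depends only on the degree sequence. Since every weakly threshold graph is by definition a realization of a weakly threshold sequence $d$ with $\Delta_m(d)=0$, every such graph is split.

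I do not anticipate any real obstacle here, as the essential work has already been done in Lemma~\ref{lem: Delta_m is even}: the proof is simply the pigeonhole-style remark that an even number lying in $\{0,1\}$ must be $0$, combined with the cited degree-sequence characterization of split graphs. The only point requiring a moment's care is to confirm that the Hammer--Simeone result applies at the level of the whole sequence and hence to all realizations, which is exactly what makes the leap from ``weakly threshold sequence'' to ``weakly threshold graph'' legitimate.
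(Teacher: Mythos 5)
Your proof is correct and takes essentially the same route as the paper: combine the parity result of Lemma~\ref{lem: Delta_m is even} with the bound $0 \leq \Delta_m(d) \leq 1$ to force $\Delta_m(d)=0$, then invoke the Hammer--Simeone degree-sequence criterion for split graphs. Your closing remark about passing from the sequence to every realization is a subtlety the paper's proof glosses over, and you resolve it correctly: splitness is determined by the degree sequence alone.
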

\begin{proof}
Let $d$ be the degree sequence of a weakly threshold graph $G$, and let $m=m(d)$. By definition, $\Delta_m(d) \leq 1$, so Lemma~\ref{lem: Delta_m is even} implies that $\Delta_m(d)=0$, which in turn implies that $G$ is a split graph.
\end{proof}

We now discuss the majorization order on degree sequences. Statement (iii) of Theorem implies that in the poset of degree sequences ordered by majorization, the threshold sequences are the maximal elements. Our next result shows that weakly threshold sequences, though they include degree sequences that are not maximal in this poset, do form an upward-closed subset of the poset. We first require some preliminary ideas.

A \emph{unit transformation} is an operation on a degree sequence $a=(a_1,\dots,a_n)$ that subtracts 1 from $a_i$ and adds 1 to $a_j$ for indices $i,j \in \{1,\dots,n\}$ such that $a_i \geq a_j+2$. Pictorially, if $a'$ is the resulting sequence (after reordering terms into descending order), then $C(a')$ is obtained by ``moving'' a 1 from one row of $C(a)$ down to the end of the nonzero entries in a lower row (replacing a 0 in that row and being replaced by a 0 in the original row). A well known result states that if $a \succeq b$, then $b$ can be obtained from $a$ after a finite sequence of unit transformations.

\begin{lem}\label{lem: EG diffs preserved upwards}
If the degree sequence $d$ can be obtained by a unit transformation on a degree sequence $e$, and if $m'=\min\{m(d),m(e)\}$, then for each $k \in \{1,\dots,m'\}$ we have $\Delta_k(e) \leq \Delta_k(d)$.
\end{lem}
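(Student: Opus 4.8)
The plan is to compute $\Delta_k(d)-\Delta_k(e)$ directly from the defining formula~\eqref{eq: EG differences} and show that it is nonnegative. Writing $S_k(a)=\sum_{i=1}^k a_i$ and $T_k(a)=\sum_{i>k}\min\{k,a_i\}$, the constant term $k(k-1)$ cancels, leaving
\[
\Delta_k(d)-\Delta_k(e)=\big(T_k(d)-T_k(e)\big)-\big(S_k(d)-S_k(e)\big).
\]
Here $S_k(a)$ counts the cells in the first $k$ rows of the ordinary Ferrers diagram of $a$ (the diagram with $a_i$ cells in row $i$), while $T_k(a)$ counts the cells of that diagram lying in rows below row $k$ and in columns at most $k$; indeed row $i>k$ contributes exactly $\min\{k,a_i\}$ such cells. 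Both counts depend only on the multiset of terms of $a$, which is what lets me sidestep the bookkeeping introduced by re-sorting. (For $k\le m'$ both $\Delta_k(e)$ and $\Delta_k(d)$ are genuine Erd\H{o}s--Gallai differences; in fact I expect the inequality to hold for every $k\ge 1$ for which the formula makes sense, the bound $m'$ serving only to guarantee that both quantities are legitimate differences.)

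The crux is to describe the unit transformation pictorially and pin down exactly which cell is removed and which is added. Suppose the transformation decreases a term of value $a:=e_i$ and increases a term of value $b:=e_j$, with $a\ge b+2$. I claim that, in the Ferrers diagram of the sorted sequence $d$, this amounts to deleting the corner cell in column $a$ from row $\alpha$ and inserting a new cell in column $b+1$ into row $\beta+1$, where $\alpha$ is the number of terms of $e$ that are at least $a$ and $\beta$ is the number of terms of $e$ that are at least $b+1$. Since $a\ge b+2>b+1$ we have $\alpha\le\beta$; moreover, because the decreased term stays $\ge b+1$ and the increased term stays $<a$, the two counts may be read off directly from $e$ and do not interfere with one another. \emph{The main obstacle is precisely this identification}: one must verify that, after the perturbed terms are returned to nonincreasing order, the single deleted cell sits at position $(\alpha,a)$ and the single added cell at $(\beta+1,b+1)$. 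Once that is checked, everything else is routine, since changing one cell alters each of $S_k$ and $T_k$ by at most the indicator of whether that cell lies in the relevant region.

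Carrying out this short computation, the removed cell $(\alpha,a)$ lies among the first $k$ rows iff $\alpha\le k$ and in the region counted by $T_k$ iff $a\le k\le\alpha-1$, and similarly for the added cell $(\beta+1,b+1)$; collecting terms yields
\[
\Delta_k(d)-\Delta_k(e)=[\,b+1\le k\le\beta\,]-[\,a\le k\le\alpha-1\,]+[\,\alpha\le k\le\beta\,],
\]
where $[\,\cdots\,]$ equals $1$ if the condition holds and $0$ otherwise. To finish, I would observe that whenever $a\le k\le\alpha-1$ one automatically has $b+1\le k\le\beta$, since $k\ge a\ge b+2>b+1$ and $k\le\alpha-1<\alpha\le\beta$. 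Hence the first bracket dominates the (subtracted) second, and the final bracket is nonnegative, so $\Delta_k(d)-\Delta_k(e)\ge 0$, which is exactly $\Delta_k(e)\le\Delta_k(d)$.
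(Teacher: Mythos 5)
Your proof is correct, but it runs along a genuinely different track from the paper's. The paper disposes of this lemma in two lines by reusing Lemma~\ref{lem: EG diff via diagram}: writing $\Delta_k=B_k-R_k$ in the \emph{corrected} Ferrers diagram, it asserts that a unit transformation can only shrink $R_k$ and grow $B_k$, and chains $R_k(d)\le R_k(e)\le B_k(e)\le B_k(d)$, where the middle inequality is $\Delta_k(e)\ge 0$, i.e., the Erd\H{o}s--Gallai inequality for the graphic sequence $e$. You instead work in the ordinary (uncorrected) Ferrers diagram straight from the definition~\eqref{eq: EG differences}, identify exactly which cell is deleted, at $(\alpha,a)$, and which is added, at $(\beta+1,b+1)$, and compute $\Delta_k(d)-\Delta_k(e)$ as an explicit signed sum of three indicator brackets; the identification of the two cells is indeed the crux, and your justification is sound (deleting and adding those cells leaves a nonincreasing diagram with the correct multiset of row lengths, using $a-1\ge b+1$ and $\alpha\le\beta$, so it must be the diagram of $d$), and the final pointwise comparison of brackets is correct. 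Your route is more elementary and more general: it never invokes Lemma~\ref{lem: EG diff via diagram} or the graphicality of $e$ (the middle inequality in the paper's chain turns out to be dispensable), it yields an exact formula for the change in $\Delta_k$ rather than just an inequality, and it proves the conclusion for every $k\ge 1$ and for arbitrary nonincreasing nonnegative integer sequences, confirming your parenthetical remark that the bound $k\le m'$ serves only to make both sides genuine Erd\H{o}s--Gallai differences. What the paper's route buys is brevity and consistency: it reuses the corrected-diagram machinery established earlier in Section~\ref{sec: majorization}, at the cost of asserting the monotonicity of $R_k$ and $B_k$ under a unit transformation without the careful cell bookkeeping you carry out---bookkeeping that is in fact slightly more delicate in the corrected diagram because the moved 1 can shift columns when it crosses the star diagonal.
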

\begin{proof}
Define $R_k(d)$ and $B_k(d)$ as in the statement of Lemma~\ref{lem: EG diff via diagram}. Since $d$ is obtained through a unit transformation on $e$, it follows that $R_k(d) \leq R_k(e) \leq B_k(e) \leq B_k(d)$; by Lemma~\ref{lem: EG diff via diagram}, we see that $\Delta_k(e) \leq \Delta_k(d)$.
\end{proof}

\begin{lem}\label{lem: if m changes}
Let $d$ and $e$ be degree sequences, and let $m=m(e)$ and $m'=m(d)$. Suppose that $e \succeq d$ and that $d$ is obtained by performing a single unit transformation on $e$. If $m' < m$, then $m=m'+1$ and $\Delta_{m}(e) = \Delta_{m'}(d)$.
\end{lem}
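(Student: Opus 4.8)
The plan is to control precisely how the sorted degree sequence changes under one unit transformation, and then read off both conclusions from the formula~\eqref{eq: EG differences}. The structural fact I would record first is that a single unit transformation alters the sorted list in exactly two places: moving one $1$ from a higher row of $C(e)$ down to a lower row decreases exactly one term (say at index $a$) by $1$ and increases exactly one term (say at index $b$) by $1$, leaving every other term fixed. Because $e \succeq d$ forces $\sum_{i\le k}e_i \ge \sum_{i\le k}d_i$ for all $k$, we cannot have $b<a$, so $a<b$; consequently $\sum_{i\le k}e_i - \sum_{i\le k}d_i$ equals $1$ when $a\le k<b$ and $0$ otherwise, and in particular $|e_i-d_i|\le 1$ for every $i$.

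Next I would settle $m=m'+1$ and extract the exact values needed for the second claim. From $m=m(e)$ we get $e_{m-1}\ge e_m\ge m-1$, while $m'<m$ gives $d_m\le m-2$. If $m'$ were at most $m-2$, then $m-1>m(d)$ would force $d_{m-1}\le m-3$, contradicting $|e_{m-1}-d_{m-1}|\le 1$; hence $m'=m-1$. Applying $|e_m-d_m|\le 1$ together with $e_m\ge m-1$ and $d_m\le m-2$ then pins down $e_m=m-1$ and $d_m=m-2$, and also shows that the decreased index is $a=m$, so $b>m$ and therefore $\sum_{i\le m}e_i-\sum_{i\le m}d_i=1$.

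For the equality $\Delta_m(e)=\Delta_{m-1}(d)$ I would substitute $d_m=m-2$ into~\eqref{eq: EG differences}. Since $\min\{m,e_i\}=e_i$ for $i>m$ (as $e_{m+1}\le m-1$) and $\min\{m-1,d_i\}=d_i$ for $i\ge m$ (as $d_m=m-2$), a short simplification gives $\Delta_m(e)-\Delta_{m-1}(d)=2\bigl(1-(\sum_{i\le m}e_i-\sum_{i\le m}d_i)\bigr)$, which is $0$ by the partial-sum value just computed. As an independent check, one can instead argue pictorially via Lemma~\ref{lem: EG diff via diagram}: the $m$-th star of $C(e)$ has no $1$ to its right (as $e_m=m-1$ exactly fills the $m-1$ positions left of the diagonal in row $m$) and none below it (a $1$ below it in column $m$ would need some $e_i\ge m$ with $i>m$), whence $\Delta_m(e)=\Delta_{m-1}(e)$; Lemma~\ref{lem: EG diffs preserved upwards} at $k=m-1$ then gives $\Delta_m(e)\le\Delta_{m-1}(d)$, and since the displayed formula forces the difference into $\{0,2\}$, it must be $0$.

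I expect the last step to be the crux. The monotonicity result of Lemma~\ref{lem: EG diffs preserved upwards} only supplies the inequality $\Delta_{m-1}(e)\le\Delta_{m-1}(d)$, and a generic unit transformation really can raise $\Delta_{m-1}$ by $2$, so the difficulty is to rule this out in the present situation. What saves the argument is the exact local description of the transformation near index $m$ --- the equalities $e_m=m-1$ and $d_m=m-2$, together with the ``one down at $a=m$, one up at $b>m$'' picture --- which forces the relevant partial-sum change to be exactly $1$ and hence the two Erd\H{o}s--Gallai differences to coincide.
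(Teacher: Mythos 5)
Your proof is correct and takes essentially the same route as the paper's: pin down the local structure of the transformation near index $m$ (forcing $e_m=m-1$, $d_m=m-2$, $m'=m-1$, and the decrease occurring at index $m$ with the increase below it), then compute $\Delta_m(e)-\Delta_{m-1}(d)$ directly from the defining formula, which collapses to $0$. The paper organizes the final algebra in terms of the changed entries $e_j,d_j$ rather than your partial-sum quantity, but the computation is the same; your diagram-based cross-check is a nice bonus but not a different method.
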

\begin{proof}
Suppose that $d$ and $e$ are as described, with $m' < m$, and suppose that the unit transformation on $e$ that produces $d$ reduces $e_i$ by 1 and increases $e_j$ by 1. Now $e_m -1 \leq d_{m}< m-1 \leq e_m$, so $i=m$ and $e_m = m-1$ and $d_m = m-2$, while $d_l = e_l$ for $1 \leq l \leq m-1$. We then find that $d_{m-1} \geq d_m = m-2$, so $m-1 \leq m' < m$, which implies $m' = m-1$. When $l > m$, we have $e_l \leq e_{m+1} < m$, so $\min\{m,e_l\} = e_l$. Likewise, when $l>m'$ we have $\min\{m',d_l\} = d_l$. Then
\begin{align*}
\Delta_m(e) - \Delta_{m'}(d) &= m(m-1) - m'(m'-1) + \sum_{l > m} \min\{m,e_l\} - \sum_{l > m'} \min\{m',d_l\} - \sum_{l=1}^m e_l + \sum_{l=1}^{m'} d_l\\
&= 2(m-1) + e_j - d_j - d_m - e_m\\
&= 0. \qedhere
\end{align*}
\end{proof}

\begin{thm}\label{thm: WT sequences are upward closed}
If $d$ is a weakly threshold sequence and $e$ is a degree sequence such that $e \succeq d$, then $e$ is also a weakly threshold sequence.
\end{thm}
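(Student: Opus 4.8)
The plan is to reduce to the case of a single unit transformation and then let the two preceding lemmas do the work. Since $e \succeq d$, the well-known result quoted above produces a chain $e = f_0, f_1, \dots, f_t = d$ of degree sequences in which each $f_{i+1}$ is obtained from $f_i$ by a single unit transformation, so that $f_i \succeq f_{i+1}$ at every step. I would argue by downward induction along this chain: the terminal sequence $f_t = d$ is weakly threshold by hypothesis, and it suffices to prove the one-step claim that whenever a degree sequence $d'$ obtained from a degree sequence $e'$ by a single unit transformation is weakly threshold, $e'$ is weakly threshold as well. Applying this with $e' = f_i$ and $d' = f_{i+1}$ then propagates the property back up the chain to $e = f_0$.

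For the one-step claim, write $m = m(e')$ and $m' = m(d')$, fix $k \in \{1,\dots,m\}$, and aim to show $\Delta_k(e') \leq 1$. For every $k \leq \min\{m,m'\}$, Lemma~\ref{lem: EG diffs preserved upwards} gives $\Delta_k(e') \leq \Delta_k(d')$, and since $d'$ is weakly threshold and $k \leq m'$, the right-hand side is at most $1$; this disposes of all indices not exceeding $\min\{m,m'\}$. If $m \leq m'$ this already covers the whole range $\{1,\dots,m\}$, so the only case left is $m > m'$, and there the only uncovered indices are those with $m' < k \leq m$.

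To close this last case I would invoke Lemma~\ref{lem: if m changes}, which says precisely that when the corrected Durfee number drops under a unit transformation we have $m = m' + 1$ and $\Delta_m(e') = \Delta_{m'}(d')$. Thus the single uncovered index is $k = m$, and since $d'$ is weakly threshold with $m' = m(d')$, the difference $\Delta_{m'}(d')$ is at most $1$, forcing $\Delta_m(e') \leq 1$. Combined with the previous paragraph, this yields $\Delta_k(e') \leq 1$ for all $k \in \{1,\dots,m\}$, so $e'$ is weakly threshold and the induction goes through.

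The only genuine subtlety, which is exactly what the two lemmas are built to absorb, is that the corrected Durfee number need not be constant under a unit transformation, so the set of indices over which the Erd\H{o}s--Gallai differences must be controlled is not fixed as one moves up the chain. The key structural facts are that $m$ can increase by at most $1$ in passing from $d'$ up to $e'$, and that precisely in this borderline case the new top-index difference $\Delta_m(e')$ coincides with a difference of $d'$ that the inductive hypothesis already bounds; everything strictly below the Durfee number is handled monotonically by Lemma~\ref{lem: EG diffs preserved upwards}. I expect verifying that these two lemmas together cover every index to be the main point requiring care, but no further computation beyond what they already supply should be needed.
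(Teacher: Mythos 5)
Your proposal is correct and follows essentially the same route as the paper: reduce to a single unit transformation, use Lemma~\ref{lem: EG diffs preserved upwards} for all indices up to $\min\{m(d),m(e)\}$, and invoke Lemma~\ref{lem: if m changes} to handle the one remaining index when the corrected Durfee number drops. Your write-up is in fact somewhat more explicit than the paper's about which indices each lemma covers, but the underlying argument is identical.
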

\begin{proof}
It suffices to prove the result in the case that $d$ is obtained via a single unit transformation on $e$; suppose that this is the case. If $m(e) \leq m(d)$, then the result follows inductively from Lemma~\ref{lem: EG diffs preserved upwards}. If instead $m(e)>m(d)$ then applying Lemmas~\ref{lem: EG diffs preserved upwards} and~\ref{lem: if m changes} inductively we find that each of $\Delta_1(e), \dots, \Delta_{m(e)}(e)$ is equal to one of $\Delta_1(d), \dots, \Delta_{m(d)}(d)$, and hence $e$ is a weakly threshold sequence.
\end{proof}

\section{Iterative construction} \label{sec: iter const}

Threshold graphs have a characterization via a construction algorithm. A \emph{dominating vertex} is a vertex that is adjacent to all other vertices in the graph. An \emph{isolated vertex} is a vertex that is adjacent to none of the other vertices.

\begin{thm}[see~{{\cite[Theorem 1.2.4]{MahadevPeled95}}}] \label{thm: threshold dom/iso}
A graph $G$ is a threshold graph if and only if $G$ can be obtained by beginning with a single vertex and iteratively adding either a dominating vertex or an isolated vertex.
\end{thm}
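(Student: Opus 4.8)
The plan is to work entirely through the degree-sequence characterization of Theorem~\ref{thm: threshold deg seq char}, so that a graph $G$ with degree sequence $d$ is threshold exactly when $\Delta_k(d)=0$ for every $k\in\{1,\dots,m(d)\}$, where $\Delta_k$ is the Erd\H{o}s--Gallai difference of~\eqref{eq: EG differences}. Both implications will be proved by induction on $n=|V(G)|$, with the one-vertex graph $K_1$ (degree sequence $(0)$, for which $m=1$ and $\Delta_1=0$) serving as the base case. The two directions reduce to a single pair of facts: (a) appending an isolated or dominating vertex to a threshold graph again yields a threshold graph, and (b) a threshold graph on at least two vertices always contains an isolated or dominating vertex whose deletion returns a threshold graph. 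Since deleting such a vertex is exactly the inverse of appending one, (a) and (b) together supply both implications at once.

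For (a), adding an isolated vertex merely appends a $0$ to $d$; this changes neither $m(d)$ nor any term $\min\{k,d_i\}$ with $k\ge 1$, so every $\Delta_k$ is unchanged and the threshold property survives. The substantive case is adding a dominating vertex, which replaces $d$ by $d'=(n,d_1+1,\dots,d_n+1)$, already nonincreasing since $d_1\le n-1$. Here I would first check that $d'_{i+1}=d_i+1\ge i$ precisely when $i\le m(d)$, so that $m(d')=m(d)+1$. The heart of the argument is the identity $\min\{k,d_j+1\}=\min\{k-1,d_j\}+1$, which, after summing the defining expression~\eqref{eq: EG differences} and cancelling, yields $\Delta_k(d')=\Delta_{k-1}(d)$ for $2\le k\le m(d')$, together with $\Delta_1(d')=0$ (every old vertex now has positive degree). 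Thus the $\Delta$-values of $d'$ are exactly those of $d$ padded with a leading $0$, and all of them vanish when $d$ is threshold.

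For (b), the existence of the desired vertex falls directly out of the single equation $\Delta_1(d)=0$: since $\Delta_1(d)=\bigl|\{i\ge 2:d_i\ge 1\}\bigr|-d_1$, the top degree $d_1$ equals the number of non-isolated vertices among $v_2,\dots,v_n$. Hence if $G$ has no isolated vertex then $d_1=n-1$ and $v_1$ is dominating, while otherwise an isolated vertex is present. Deleting an isolated vertex strips the trailing $0$ from $d$, and deleting the dominating vertex $v_1$ (whose neighbors each then lose one from their degree) sends $d$ back to the sequence that produced it under the dominating-vertex construction above; in both cases the preceding computation, read in reverse, shows the smaller graph is threshold, completing the induction. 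I expect the only real friction to be the dominating-vertex bookkeeping in~(a): one must track the upward shift $m(d')=m(d)+1$ carefully and confirm that the reindexing $\Delta_k(d')=\Delta_{k-1}(d)$ lines up over the full range $k\le m(d')$, since an off-by-one there would either miss the new top constraint or spuriously invoke an undefined $\Delta_0$.
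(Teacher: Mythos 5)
Your proof is correct, but there is nothing in the paper to compare it against: the paper states this theorem as a known result and defers its proof entirely to Mahadev and Peled (Theorem 1.2.4 of their monograph). Judged on its own merits, your argument goes through. Taking Theorem~\ref{thm: threshold deg seq char} as the working definition, your key computations are right: the identity $\min\{k,d_j+1\}=\min\{k-1,d_j\}+1$ yields $\Delta_k(d')=\Delta_{k-1}(d)$ for $2\le k\le m(d')=m(d)+1$ together with $\Delta_1(d')=0$ when a dominating vertex is added; appending a $0$ changes neither $m(d)$ nor any $\Delta_k$; and in the reverse direction $\Delta_1(d)=0$ forces $d_1=|\{i\ge 2: d_i\ge 1\}|$, so a threshold graph with no isolated vertex has a dominating one. (Your claim that $d_i\ge i-1$ holds \emph{precisely} when $i\le m(d)$ is legitimate because $d$ is nonincreasing, so $\{i: d_i\ge i-1\}$ is an initial segment; this is the one place an unstated monotonicity fact is used.) The double induction then closes both implications, since deletion inverts addition and your $\Delta$-shift formulas read the same in either direction. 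It is worth noting that your route is not merely valid but is exactly the paper's own later methodology specialized to the threshold case: adding a dominating or isolated vertex is composition with $(0;)$ or $(;0)$, and Theorem~\ref{thm: concatenate EG diffs} says precisely that these operations prepend a $0$ to, or leave unchanged, the list of Erd\H{o}s--Gallai differences; likewise your step (b) is the $\Delta_1$ portion of the paper's Lemma~\ref{lem: max min degrees}. The classical proof in the cited monograph instead argues structurally about threshold graphs (via threshold weights or forbidden configurations) that the maximum-degree vertex is dominating or the minimum-degree vertex is isolated; your version trades that structural insight for degree-sequence bookkeeping, at the cost of using Theorem~\ref{thm: threshold deg seq char} as a black box.
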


In this section we show that weakly threshold graphs can be obtained from small initial graphs by repeatedly adding vertices; to generate all weakly threshold graphs we slightly relax the conditions on the adjacencies and non-adjacencies required of the added vertices.

Given a graph $G$ and a vertex $v$ of $G$, we say that $v$ is \emph{weakly dominating} if $v$ is adjacent to every other vertex of $G$ except for a single vertex that has minimum degree in $G$. The vertex $v$ is instead \emph{weakly isolated} if $v$ has no neighbors except for a single vertex that has maximum degree in $G$. A \emph{semi-joined $P_4$} in $G$ is a collection $P$ of 4 vertices that induce a subgraph isomorphic to $P_4$, in which the path midpoints are adjacent to every vertex not in $P$, and the path endpoints are adjacent to no vertex not in $P$. (Note that the midpoints of a semi-joined $P_4$ are weakly dominating vertices, and the endpoints are weakly isolated vertices.)

Most of this section will be devoted to establishing the following. 

\begin{thm}\label{thm: char via Construction Algorithm}
A graph $G$ is a weakly threshold graph if and only if $G$ can be obtained by beginning with a graph isomorphic to $K_1$ or to $P_4$ and  iteratively adding to the graph either a dominating vertex, an isolated vertex, a weakly dominating vertex, a weakly isolated vertex, or a semi-joined $P_4$.
\end{thm}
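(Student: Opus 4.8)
The plan is to prove the two directions separately: sufficiency (each listed operation produces only weakly threshold graphs) by bookkeeping Erd\H{o}s--Gallai differences, and necessity by induction on $|V(G)|$. For sufficiency I would first note that the sequences $(0)$ and $(2,2,1,1)$ of $K_1$ and $P_4$ are weakly threshold, the latter by computing $\Delta_1 = 1$ and $\Delta_2 = 0$ directly. I would then check that each of the five operations sends a weakly threshold graph to a weakly threshold graph, using Lemma~\ref{lem: EG diff via diagram} to follow the list $\Delta_1, \dots, \Delta_m$. Adding a dominating vertex to a graph with degree sequence $c$ makes $m$ one larger and gives $\Delta_1 = 0$ and $\Delta_{k+1} = \Delta_k(c)$ for the remaining indices, so the new differences are the old ones with a $0$ prepended; adding an isolated vertex leaves $m$ and every difference unchanged. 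The weakly dominating, weakly isolated, and semi-joined $P_4$ operations are handled the same way, the extra care being that inserting a weak vertex, or the four vertices of a semi-joined $P_4$, can reorder the sorted degree sequence; in each configuration one checks that the surviving and newly created differences still lie in $\{0,1\}$, with Lemma~\ref{lem: Delta_m is even} as a convenient check at the last star.

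For necessity I would induct on the number of vertices, with $K_1$ and $P_4$ as base cases, aiming to locate in a given weakly threshold graph $G$ one of the five structures whose deletion---the inverse of the corresponding operation---returns a smaller weakly threshold graph to which the induction hypothesis applies. The structural guide is the almost-symmetric diagram $C(d)$: by Lemma~\ref{lem: EG diff via diagram} each $\Delta_k(d)$ is an integer, and by Theorem~\ref{thm: WT graphs are split} together with the definition we have $\Delta_0 = \Delta_m = 0$ with every value in between lying in $\{0,1\}$, so $\Delta_0, \dots, \Delta_m$ is a lattice path confined to $\{0,1\}$. I would read the clique/independent-set staircase of the split graph $G$ off this path: a diagonal star at which the path rests at $0$ behaves like a threshold step and supplies a dominating or isolated vertex, while an excursion $0 \to 1 \to 0$ marks a defect in the symmetry of $C(d)$ that I expect to correspond to a semi-joined $P_4$, and a defect anchored at the first or last relevant star to a weakly dominating or weakly isolated vertex.

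Concretely I would branch on the extreme vertices. If $G$ has an isolated vertex, delete it; this alters none of $\Delta_1, \dots, \Delta_m$, so the remainder is again weakly threshold. Otherwise every vertex is nonisolated, and then $\Delta_1(d) = 0$ forces the maximum-degree vertex to have degree $|V(G)| - 1$, i.e.\ to be dominating, so I delete it and undo the prepended $0$ from the sufficiency computation. In the remaining case $\Delta_1(d) = 1$, a defect sits at the top of the path and the maximum-degree vertex misses exactly one other vertex; I would argue that this forces $G$ to contain a weakly dominating vertex or, when the defect spans a fuller excursion, a semi-joined $P_4$ (and dually a weakly isolated vertex from the bottom of the path), whose removal strips exactly the prepended differences and leaves a shorter weakly threshold sequence.

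The main obstacle is precisely this last step. Since $\Delta_k(d)$ depends only on the degree sequence while the five operations are defined through concrete adjacencies, it does not suffice that the difference sequence looks peelable: I must show that the realization in hand genuinely contains a vertex, or four vertices, with exactly the prescribed neighborhoods---in particular that the unique vertex missed by the maximum-degree vertex actually has minimum degree. Here I would lean on the rigidity that accompanies the regime $\Delta_k(d) \le 1$, the very situation in which adjacencies are forced across all realizations, to fix the extreme-degree vertices as a (weakly) dominating vertex, a (weakly) isolated vertex, or the midpoints and endpoints of a semi-joined $P_4$. The delicate bookkeeping is concentrated at the boundary of the corrected Durfee square, where the clique meets the independent set and where, as observed after Lemma~\ref{lem: EG diff via diagram}, consecutive defects must alternate in orientation; controlling how these defects interleave is where the real work lies, the remaining difference computations being the routine ones already rehearsed for sufficiency.
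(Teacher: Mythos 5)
Your high-level plan coincides with the paper's (Erd\H{o}s--Gallai bookkeeping through $C(d)$ for one direction, induction with peeling of extreme vertices for the other), but the step you yourself defer as ``where the real work lies'' is the crux of the theorem, and the tool you propose for it would fail. You want to ``lean on the rigidity that accompanies the regime $\Delta_k(d)\le 1$,'' i.e., on the forced-adjacency results of~\cite{ForcedEdges}; but that implication runs the wrong way: an Erd\H{o}s--Gallai difference at most $1$ is a \emph{necessary} condition for an adjacency to be forced in all realizations, not a sufficient one, so knowing that every $\Delta_k(d)\le 1$ buys you no forced adjacencies at all. What actually closes the gap in the paper is Lemma~\ref{lem: max min degrees}, whose ingredients never appear in your sketch: assuming $G$ has no isolated and no dominating vertex, $\Delta_1(d)\le 1$ forces $d_1=n-2$; the parity of the degree sum rules out $d_2=1$; then, writing $p$ for the number of degree-$1$ terms, one computes $\Delta_2(d)=n-p-d_2$, and the two-sided bound $0\le\Delta_2(d)\le 1$ forces $d_n=1$ and splits the analysis into exactly three subcases ($d_2<d_1$; $d_{n-1}>d_n$; or $d_1=d_2=n-2$, $d_{n-1}=d_n=1$ with $\Delta_2(d)=0$, hence exactly two pendant vertices). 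In each subcase elementary counting---e.g., a degree-$1$ vertex other than the unique non-neighbor of a degree-$(n-2)$ vertex must be pendant to that vertex---exhibits the weakly isolated vertex, weakly dominating vertex, or semi-joined $P_4$, and shows its deletion leaves a weakly threshold graph. It is $\Delta_2$ and parity, not realization rigidity, that do the work; without this your induction step has no engine.

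There is also a secondary gap in your sufficiency direction. The uniform ``insert a row and column into $C(d')$ and swap two entries'' computation is only valid when the host graph has minimum degree exactly $1$ and maximum degree exactly two less than its number of vertices. If, say, a weakly dominating vertex is added to a weakly threshold graph containing an isolated vertex, the new vertex misses that degree-$0$ vertex, the diagram surgery looks different, and the difference list transforms by a different rule (this addition is really a dominating-vertex-then-isolated-vertex addition in disguise). The paper neutralizes all such interactions with Lemma~\ref{lem: weaklies placement}, which rewrites any construction sequence so that a Type 3 operation separates every Type 1 operation from later Type 2 operations, guaranteeing the degree hypotheses exactly when the diagram computation is invoked. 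Your phrase ``in each configuration one checks'' is true in the end, but only because of such a rewriting (or an equivalent case analysis); as stated, the claim that all five operations preserve the weakly threshold property is asserted rather than proved in precisely the configurations where it is delicate.
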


The additions in the theorem refer to new vertices added; in no case do we change any of the adjacency relationships that existed prior to the addition of a new vertex or set of vertices.

We present the proof of Theorem~\ref{thm: char via Construction Algorithm} in Section~\ref{subsec: pf of iter const}. In order to facilitate the proof and to lay groundwork for later sections, in Section~\ref{subsec: canon decomp} we introduce a decomposition scheme of graphs and degree sequences that will assist in analyzing the Erd\H{o}s--Gallai differences of a degree sequence. Following the proof of Theorem~\ref{thm: char via Construction Algorithm}, in Section~\ref{subsec: complements} we present a few of its consequences, including the fact that weakly threshold graphs are closed under complementation.

\subsection{The canonical decomposition of graphs and degree sequences} \label{subsec: canon decomp}

In this section we recall two composition operations and their accompanying decompositions. Both decompositions were introduced by Tyshkevich in multiple papers (our presentation is adapted from~\cite{Tyshkevich00}, which contains a summary and bibliography) and are called the \emph{canonical decomposition}; one is a decomposition of graphs, and the other is a natural translation of the ideas into the context of degree sequences. After describing the results of Tyshkevich, we use the canonical decomposition of a degree sequence to better understand its list of Erd\H{o}s--Gallai differences.

Let $(G,A,B)$ denote a triple consisting of a split graph $G$ and a  partition $A,B$ of its vertex set into an independent set $A$ and a clique $B$ such that either $A$ or $B$ can be empty but $A \cup B \neq \emptyset$. Since the partition $A,B$ is fixed, we refer to this triple as a \emph{splitted graph}. Similarly, if $d$ is the degree sequence of a split graph having partition $A,B$, then  we may form a \emph{splitted degree sequence} by writing the terms of $d$ with a semicolon separating the terms corresponding to vertices in $B$ from terms corresponding to vertices in $A$ (note that we can do this even while maintaining the terms in descending order). For example, the split graph with degree sequence $(3,3,2,1,1)$ has two distinct partitions of its vertex set into an independent set and a clique; the associated splitted degree sequences are $(3,3,2;1,1)$ and $(3,3;2,1,1)$.

Given a splitted graph $(G,A,B)$ and a graph $H$, each with nonempty vertex sets, we define the \emph{composition} $(G,A,B) \circ H$ to be the graph formed by taking the disjoint union of $G$ and $H$ and adding to it all edges joining vertices in $B$ to vertices in $V(H)$.

We can also compose two splitted graphs $(G,A,B)$ and $(H,C,D)$ by treating the second graph simply as a graph, though if we wish to we can also think of the resulting graph $(G,A,B) \circ (H,C,D)$ as a splitted graph with independent set $A \cup C$ and $B \cup D$. We trust that context will make it clear whether the result of a composition is intended as a (non-partitioned) graph or a splitted graph. With these understandings, however, we note that the operation $\circ$ is associative.

We use the same notation $\circ$ to indicate the analogous composition of a splitted degree sequence with a graph degree sequence or splitted degree sequence. Here, if $d=(d_1,\dots,d_k;d_{k+1},\dots,d_n)$ and $e = (e_1,\dots,e_m)$, then $d \circ e$ is obtained by adding $m$ to each of the terms $d_1,\dots,d_k$, adding $k$ to each of the terms $e_1,\dots,e_m$, and combining these results with the (unchanged) terms $d_{k+1},\dots,d_n$ into a sequence in descending order. In symbols, \[(d_1,\dots,d_k;d_{k+1},\dots,d_n) \circ (e_1,\dots,e_m) = (d_1+m,\dots,d_k+m, e_1+k,\dots,e_m+k,d_{k+1},\dots,d_n).\] Clearly, the composition of the splitted degree sequence of $(G,A,B)$ and the degree sequence of a graph $H$ is the degree sequence of the graph $(G,A,B) \circ H$. (We can analogously define the composition of two splitted degree sequences and treat these compositions with the same understandings as with the compositions of graphs.)

We call a (splitted or non-partitioned) graph or degree sequence \emph{decomposable} if it can be written as the composition of other graphs or sequences; otherwise, it is \emph{indecomposable}.

Tyshkevich showed that graphs and degree sequences can be completely decomposed in a unique way, which we refer to as the \emph{canonical decomposition}.

\begin{thm}[\cite{Tyshkevich00}]\label{thm: canon decomp}\mbox{}
\begin{enumerate}
\item[\textup{(i)}] Every graph $G$ can be expressed as a composition \[(G_k,A_k,B_k)\circ \dots \circ (G_1,A_1,B_1) \circ G_0\] of indecomposable components; here the leftmost $k$ components are splitted graphs and the rightmost is a graph (we say that $k=0$ if $G$ is indecomposable). Moreover, this decomposition is unique in the sense that two canonical decompositions of a graph have the same number of components and have isomorphisms between corresponding components (that preserve splitting partitions, in the case of splitted graph components).
\item[\textup{(ii)}] Every degree sequence $d$ can be uniquely expressed as a composition \[d=\alpha_k\circ \dots \circ \alpha_1 \circ \alpha_0\] of indecomposable components; here the leftmost $k$ components are splitted degree sequences and the rightmost is a degree sequence (we again say that $k=0$ if $d$ is indecomposable).
\end{enumerate}
\end{thm}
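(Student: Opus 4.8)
The plan is to prove part (i) for graphs, establishing first the existence of a decomposition into indecomposable components and then its uniqueness, and to obtain part (ii) by an analogous argument that exploits the remark above: the splitted degree sequence of $(G,A,B)$ composed with the degree sequence of $H$ is the degree sequence of $(G,A,B)\circ H$.

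For existence in (i) I would induct on $|V(G)|$. Call a partition $V(G)=X\sqcup Y$ with $X,Y\neq\emptyset$ a \emph{splitting} if $G[X]$ has a split partition $X=C\sqcup D$ ($C$ independent, $D$ a clique) for which every vertex of $D$ is adjacent to all of $Y$ and every vertex of $C$ to none of $Y$; this is precisely the data witnessing $G=(G[X],C,D)\circ G[Y]$. If $G$ admits no splitting it is indecomposable and we take $k=0$. Otherwise, among all splittings choose one whose left part $X$ has minimum size. Associativity forces $(G[X],C,D)$ to be indecomposable, since a nontrivial decomposition of this splitted graph would regroup into a splitting of $G$ with strictly smaller left part. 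Taking $(G[X],C,D)$ as the leftmost component and applying the inductive hypothesis to $G[Y]$ then yields the desired expression, with every component to the left splitted and the rightmost a plain graph.

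The substance lies in uniqueness, for which I would show that the splittings of $G$ form a chain under inclusion of left parts; the canonical components are then the successive set differences, which yields both the equality of the number of components and the componentwise isomorphisms. Concretely, given two splittings with left parts $X_1,X_2$ and associated clique/independent partitions $C_i,D_i$, I would set $P=X_1\cap X_2$, $Q=X_1\setminus X_2$, $R=X_2\setminus X_1$, and $S=Y_1\cap Y_2$, and examine the adjacencies among these four cells. Using that each $D_i$ is complete and each $C_i$ anticomplete to $Y_i$, a case analysis on where the vertices of $Q$ and $R$ lie shows that when $P\neq\emptyset$ the graph $G[X_1]$ splits off $G[Q]$ and $G[X_2]$ splits off $G[R]$; this contradicts indecomposability of the corresponding leftmost component, forcing $Q=\emptyset$ or $R=\emptyset$, that is, $X_1\subseteq X_2$ or $X_2\subseteq X_1$.

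I expect the main obstacle to be that a split graph may carry several split partitions (as the sequence $(3,3,2,1,1)$ already shows), so the labelings $C_1,D_1$ and $C_2,D_2$ need not agree on $P$. The decomposition produced by the interchange argument uses the labeling inherited from the \emph{other} splitting, and to genuinely contradict indecomposability one must check that it respects the given split partition of the leftmost component. I would resolve this by pinning down exactly which vertices can pass between clique and independent set; these are controlled by the boundary case $\Delta_m(d)=0$ from Theorem~\ref{thm: WT graphs are split} and the associated symmetry of $C(d)$ via Lemma~\ref{lem: EG diff via diagram}, and the finitely many ambiguous vertices can be treated separately. For part (ii) the same inductive peeling applies verbatim to the sequence composition; alternatively, and more in keeping with the present framework, I would characterize the admissible cut points of $d$ intrinsically through the vanishing of Erd\H{o}s--Gallai differences at split boundaries, so that the cut points depend only on $d$ and are nested, giving uniqueness directly.
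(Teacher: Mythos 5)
Your existence argument is sound: peel off a splitting with minimal left part, note via associativity that minimality forces that left factor to be indecomposable, and recurse on the right factor. The uniqueness half, however, rests on a claim that is false: the left parts of the splittings of $G$ do \emph{not} form a chain under inclusion. Take $G=K_3$ with vertices $a,b,c$. Then $(\{a\},\{b,c\})$ and $(\{b\},\{a,c\})$ are both splittings (take $C=\emptyset$ and $D$ the single vertex), their left parts are disjoint, and both left factors are indecomposable; correspondingly $K_3$ has canonical decompositions that peel off $a$ first or $b$ first. Your case analysis derives a contradiction only when $P=X_1\cap X_2\neq\emptyset$, so it cannot exclude this configuration --- and the configuration is not excludable, because it really occurs. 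The $P\neq\emptyset$ case fails as well: in $G=P_3$ with center $c$ and leaves $u,v$, the pairs $(\{c,u\},\{v\})$ and $(\{c,v\},\{u\})$ are incomparable splittings with $P=\{c\}$, and no contradiction is available there because the left factor of an arbitrary splitting (for instance, a proper prefix of a canonical decomposition) is usually decomposable; ``indecomposability of the corresponding leftmost component'' applies only to the single topmost canonical component, not to the left factor of the splitting you are comparing.

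What these examples expose is that Tyshkevich's uniqueness holds only up to isomorphism of components (preserving splitting partitions), not up to equality of vertex sets: whenever a $K_1$-component sits next to a clone (several dominating vertices) or a twin (several isolated vertices), the cut points themselves are genuinely ambiguous. Hence no proof that identifies the canonical components as ``successive set differences'' of a canonical chain of vertex subsets can succeed; a correct proof must show instead, by induction, that any two choices of topmost indecomposable component are either identical or are single vertices that are clones/twins of one another, in which case exchanging them leaves the remainder isomorphic --- exactly the phenomenon isolated in Lemma~\ref{lem: modules and twin sets}. Your proposed repair of the split-partition ambiguity via $\Delta_m(d)=0$ and Lemma~\ref{lem: EG diff via diagram} cannot do this work, since those are statements about the degree sequence alone and cannot distinguish which of two twins is assigned to which component. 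For part (ii) your cut-point idea is more promising, because positions in a sequence are linearly ordered and the twin/clone ambiguity collapses (equal terms are interchangeable); but it still requires a proof of the intrinsic characterization of cut indices, and you cannot invoke Theorem~\ref{thm: concatenate EG diffs} for it, since that theorem presupposes the canonical decomposition whose existence and uniqueness is the very thing being proved. Note finally that the paper does not prove this theorem at all; it cites Tyshkevich, and the genuine result is substantially harder than your sketch suggests.
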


Our first application of the canonical decomposition will be to describe the Erd\H{o}s--Gallai differences of degree sequences. In~\cite{HeredUniII} the author presented a connection between the canonical components of a graph and the Erd\H{o}s--Gallai differences of its degree sequence that equal 0. We now describe a more general connection between Erd\H{o}s--Gallai differences and the composition operation $\circ$.

We begin by examining the corrected Ferrers diagrams of compositions. Suppose that $d$ is a splitted degree sequence with $n$ terms (with $k$ terms before its semicolon), and that $e$ is a degree sequence with $m$ terms. We form a new corrected Ferrers diagram by first dividing $C(d)$ into four blocks, with rows separated after the first $k$ rows and columns similarly separated. We move these blocks to the corresponding corners of a larger $(n+m)$-by-$(n+m)$ diagram, insert $C(e)$ in the middle of this diagram, and fill in the rest of the diagram with blocks of 1's above and to the left of the inserted copy of $C(e)$ and with blocks of 0's below and to the right. For example, if we let $d=(2,2;1,1)$ and $e=(1,1,1,1,0)$, then $C(d \circ e)$ is shown in Figure~\ref{fig: composition Ferrers}, with dashed lines illustrating the blocks (compare this diagram to those in Figure~\ref{fig: C(11110) and C(2211)}). It is straightforward to verify that if $b = d \circ e$, then the diagram constructed above is the corrected Ferrers diagram $C(b)$. 
\begin{figure}
\centering
$\displaystyle \left[\begin{array}{cc;{2pt/2pt}ccccc;{2pt/2pt}cc}
\star & 1 & 1 & 1 & 1 & 1 & 1 & 1 & 0\\
1 & \star & 1 & 1 & 1 & 1 & 1 & 1 & 0\\ \hdashline[2pt/2pt]
1 & 1 & \star & 1 & 0 & 0 & 0 & 0 & 0\\
1 & 1 & 1 & \star & 0 & 0 & 0 & 0 & 0\\
1 & 1 & 1 & 0 & \star & 0 & 0 & 0 & 0\\
1 & 1 & 1 & 0 & 0 & \star & 0 & 0 & 0\\
1 & 1 & 0 & 0 & 0 & 0 & \star & 0 & 0\\ \hdashline[2pt/2pt]
1 & 0 & 0 & 0 & 0 & 0 & 0 & \star & 0\\
1 & 0 & 0 & 0 & 0 & 0 & 0 & 0 & \star
\end{array}\right]$
\caption{The corrected Ferrers diagram of $(2,2;1,1)\circ(1,1,1,1,0) = (7,7,3,3,3,3,2,1,1)$.} \label{fig: composition Ferrers}
\end{figure}

Note now that $m(b) = k + m(e)$. By the symmetry of the blocks of 1's and of 0's which pad the copy of $C(e)$, Lemma~\ref{lem: EG diff via diagram} implies that $\Delta_i(b)=\Delta_i(d)$ for $1 \leq i \leq k$, and for $1 \leq i \leq m(e)$ we have $\Delta_{k+i}(b) = \Delta_i(e)$. Hence, the list of the first $m(b)$ Erd\H{o}s--Gallai differences of $b$ are obtained by appending the Erd\H{o}s--Gallai differences of $e$ to an initial segment of the list of Erd\H{o}s--Gallai differences of $d$.

More generally, we obtain the following by induction.

\begin{thm} \label{thm: concatenate EG diffs}
Suppose that $d$ is the degree sequence of a graph, and $d = \alpha_k \circ \dots \circ \alpha_1 \circ \alpha_0$ is the canonical decomposition of $d$, where in each $\alpha_i$ there are $m_i$ terms appearing before the semicolon. 

Suppose also that $S(d)$ is the sequence of integers beginning with the first $m_k$ Erd\H{o}s--Gallai differences of $\alpha_k$, followed by the first $m_{k-1}$ Erd\H{o}s--Gallai differences of $\alpha_{k-1}$, and so on, up through the first $m_1$ Erd\H{o}s--Gallai differences of $\alpha_1$, and ending with the first $m(\alpha_0)$ Erd\H{o}s--Gallai differences of $\alpha_0$.

The terms of $S(d)$, in this order, are precisely the first $m(d)$ Erd\H{o}s--Gallai differences of $d$.
\end{thm}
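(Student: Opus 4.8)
The plan is to induct on $k$, the number of splitted components in the canonical decomposition, using the associativity of $\circ$ to peel off the leftmost component and thereby reduce to the single-composition computation already carried out above (the one yielding, for $b=d\circ e$ with $d$ a splitted degree sequence having $k$ terms before its semicolon, that $m(b)=k+m(e)$, that $\Delta_i(b)=\Delta_i(d)$ for $1\le i\le k$, and that $\Delta_{k+i}(b)=\Delta_i(e)$ for $1\le i\le m(e)$). The base case $k=0$ is immediate: then $d=\alpha_0$, and by definition $S(d)$ is the list of the first $m(\alpha_0)=m(d)$ Erd\H{o}s--Gallai differences of $\alpha_0=d$, so there is nothing to prove.

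For the inductive step I would set $e=\alpha_{k-1}\circ\dots\circ\alpha_1\circ\alpha_0$ and write $d=\alpha_k\circ e$ by associativity. Here $\alpha_k$ is a splitted degree sequence with $m_k$ terms before its semicolon, and $e$ is an (unpartitioned) degree sequence, since composing splitted degree sequences with a final ordinary degree sequence produces an ordinary degree sequence. This is precisely the single-composition situation analyzed above, with $\alpha_k$ in the role of the splitted factor and $e$ in the role of the plain factor. Applying that computation gives $m(d)=m_k+m(e)$ together with $\Delta_i(d)=\Delta_i(\alpha_k)$ for $1\le i\le m_k$ and $\Delta_{m_k+i}(d)=\Delta_i(e)$ for $1\le i\le m(e)$; in words, the first $m(d)$ Erd\H{o}s--Gallai differences of $d$ are the first $m_k$ Erd\H{o}s--Gallai differences of $\alpha_k$, followed by the first $m(e)$ Erd\H{o}s--Gallai differences of $e$.

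It then remains only to identify the first $m(e)$ differences of $e$ with the tail of $S(d)$. Because each $\alpha_i$ is indecomposable, the expression $e=\alpha_{k-1}\circ\dots\circ\alpha_0$ is, by the uniqueness in Theorem~\ref{thm: canon decomp}, the canonical decomposition of $e$; it has one fewer splitted component, so the induction hypothesis applies and shows that these first $m(e)$ differences are exactly the concatenation of the first $m_{k-1}$ differences of $\alpha_{k-1}$, then the first $m_{k-2}$ of $\alpha_{k-2}$, and so on, ending with the first $m(\alpha_0)$ differences of $\alpha_0$. Splicing this list after the first $m_k$ differences of $\alpha_k$ produces precisely $S(d)$, completing the induction. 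The essential content is entirely contained in the single-composition formula established above (itself a consequence of Lemma~\ref{lem: EG diff via diagram} and the block structure of $C(\alpha_k\circ e)$); the induction is routine, and the only point needing a word of care is the verification that $e$ is again an unpartitioned degree sequence whose canonical decomposition is the evident one, so that both the single-composition formula and the induction hypothesis may legitimately be invoked.
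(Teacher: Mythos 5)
Your proof is correct and takes essentially the same route as the paper: after establishing the single-composition formula ($m(d\circ e)=k+m(e)$, $\Delta_i(d\circ e)=\Delta_i(d)$ for $i\le k$, $\Delta_{k+i}(d\circ e)=\Delta_i(e)$) from the block structure of the corrected Ferrers diagram, the paper simply asserts the theorem follows ``by induction,'' and your induction on $k$ --- peeling off $\alpha_k$ via associativity and invoking uniqueness of the canonical decomposition to apply the induction hypothesis to $\alpha_{k-1}\circ\dots\circ\alpha_0$ --- is exactly the argument being gestured at, with the routine details filled in.
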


\subsection{Proof of Theorem~\ref{thm: char via Construction Algorithm}} \label{subsec: pf of iter const}

We first prove that if a graph is weakly threshold then it can be built up through the operations described in the theorem. We begin with a lemma.

\begin{lem} \label{lem: max min degrees}
If $d=(d_1,\dots,d_n)$ is the degree sequence of a  weakly threshold graph $G$, then exactly one of the following is true:
\begin{enumerate}
\item[\textup{(a)}] $d_n=0$ and $G$ has an isolated vertex;
\item[\textup{(b)}] $d_1=n-1$ and $G$ has a dominating vertex;
\item[\textup{(c)}] $d_2 < d_1 = n-2$ and $d_{n-1} = d_n = 1$, and $G$ has a weakly isolated vertex;
\item[\textup{(d)}] $d_1 = d_2 = n-2$ and $d_{n-1} > d_n = 1$, and $G$ has a weakly dominating vertex;
\item[\textup{(e)}] $d_1=d_2=n-2$ and $d_{n-1}=d_n=1$ and $\Delta_2(d)=0$, and $G$ has four vertices forming a $P_4$ that is semi-joined to the rest of the graph.
\end{enumerate}
In each case, deleting the vertex (or four vertices, in the last case) described leaves a graph that is also weakly threshold.
\end{lem}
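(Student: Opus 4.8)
The plan is to classify by the extreme degrees $d_1$ and $d_n$, extracting all the structural information from the first two Erd\H{o}s--Gallai differences via Lemma~\ref{lem: EG diff via diagram}. First I would dispose of the two ``threshold'' cases: if $d_n=0$ then the last vertex is isolated and we are in (a), while if $d_1=n-1$ then the first vertex is dominating and we are in (b). These are mutually exclusive once $G$ has at least two vertices (a dominating vertex forbids an isolated one), and each excludes (c)--(e), whose stated degree conditions force $d_1=n-2$ and $d_n=1$. So assume $G$ has neither an isolated nor a dominating vertex; then every term of $d$ is positive, and by Lemma~\ref{lem: EG diff via diagram} we have $\Delta_1(d)=B_1(d)-R_1(d)=(n-1)-d_1$. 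Since $0\le\Delta_1(d)\le 1$ by Theorem~\ref{thm: Erdos Gallai} and the weakly threshold hypothesis, this forces $d_1=n-2$ and $\Delta_1(d)=1$.

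The linchpin is then a computation of $\Delta_2(d)$. Writing $a$ for the number of indices $i\ge 3$ with $d_i\ge 2$, and using $d_1=n-2$ together with the positivity of all degrees, one obtains $\Delta_2(d)=a+2-d_2$; equivalently, the number of vertices of degree exactly $1$ among $v_3,\dots,v_n$ equals $n-d_2-\Delta_2(d)$. Because $\Delta_2(d)\in\{0,1\}$ (note $m(d)\ge 2$ here since $d_2\ge 1$), this single identity does all the work. If $d_2<d_1=n-2$, then $n-d_2-\Delta_2(d)\ge 2$, so there are at least two vertices of degree $1$, whence $d_{n-1}=d_n=1$ and we are in case (c). If $d_2=n-2$, then the count is $2-\Delta_2(d)$, so $d_n=1$ in either event, while $d_{n-1}=1$ exactly when $\Delta_2(d)=0$; thus $\Delta_2(d)=1$ gives case (d) and $\Delta_2(d)=0$ gives case (e). This simultaneously proves $d_n=1$ in every non-threshold case, dissolving any need for a complementation argument, and shows that (c), (d), (e) partition the remaining sequences.

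Next I would exhibit the promised vertex or $P_4$ using only these degree facts, so that the conclusion holds for the given realization $G$. In each case $v_1$ (and in (d) and (e) also $v_2$) has degree $n-2$ and so misses exactly one vertex. In (c), with two vertices of degree $1$ and $v_1$ missing only one vertex, some vertex of degree $1$ is adjacent to $v_1$ and, having degree $1$, is adjacent only to the maximum-degree vertex $v_1$: it is weakly isolated. In (d), the unique vertex of degree $1$ has a single neighbor and is therefore missed by at least one of $v_1,v_2$; that vertex misses only a vertex of minimum degree and is weakly dominating. In (e), the two degree-$1$ vertices are non-adjacent (otherwise $v_1$ would miss two vertices), and a short count forces the single misses of $v_1$ and of $v_2$ to be exactly these two vertices, one each; the four vertices then induce a $P_4$ with $v_1,v_2$ as midpoints, semi-joined to the rest.

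Finally, for the deletion statements I would track Erd\H{o}s--Gallai differences directly. Removing an isolated vertex (a) changes no $\Delta_k$, and removing a semi-joined $P_4$ (e) or a dominating vertex (b) strips a $(2,2;1,1)$ or $(0;)$ canonical factor, so by Theorem~\ref{thm: concatenate EG diffs} the differences of the reduced sequence form a sublist of those of $d$. The two genuinely new cases are (c) and (d): here a direct computation shows that deleting the weakly isolated vertex yields $\Delta_k(d')=\Delta_k(d)$, while deleting the weakly dominating vertex yields the shift $\Delta_k(d')=\Delta_{k+1}(d)$, so in both cases the list of Erd\H{o}s--Gallai differences of $d'$ is again a sublist of that of $d$, and $d'$ is weakly threshold. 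I expect the main obstacle to be precisely this $\Delta_2$ identity and the bookkeeping behind the shift $\Delta_k(d')=\Delta_{k+1}(d)$, since getting the reindexing right after the degrees are reordered is the delicate point; the existence of the special vertices then follows from elementary counting.
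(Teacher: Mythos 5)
Your proposal is correct, and its core case analysis coincides with the paper's: after disposing of (a) and (b), the bound $\Delta_1(d)\le 1$ forces $d_1=n-2$, and a computation of $\Delta_2(d)$ splits the remaining sequences into (c), (d), (e). Two differences are worth recording. First, your bookkeeping is slightly cleaner: by counting degree-$1$ vertices only among $v_3,\dots,v_n$ you get the single identity (number of such vertices) $=n-d_2-\Delta_2(d)$, whereas the paper counts \emph{all} degree-$1$ terms and therefore needs a separate parity argument to rule out $d_2=1$ before running essentially the same computation. Second, and more substantially, you prove more than the paper's own proof does: the paper stops at mutual exclusivity and the degree conditions, leaving the existence of the weakly isolated vertex, weakly dominating vertex, or semi-joined $P_4$ in an arbitrary realization, as well as the closing deletion statement, implicit. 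Your counting arguments for the realizations are exactly what is needed (e.g., in case (e), that the two degree-$1$ vertices must absorb the unique non-neighbors of $v_1$ and $v_2$, one each, which yields the induced $P_4$ and its semi-join), and your deletion bookkeeping is also sound: $\Delta_k(d')=\Delta_k(d)$ when a weakly isolated vertex is removed, $\Delta_k(d')=\Delta_{k+1}(d)$ when a weakly dominating vertex is removed, and the factors $(0;)$ and $(2,2;1,1)$ for cases (b) and (e) handled by the concatenation of Erd\H{o}s--Gallai difference lists established around Theorem~\ref{thm: concatenate EG diffs}. These computations are precisely the reverses of the ones the paper performs later, inside the proof of Theorem~\ref{thm: char via Construction Algorithm} (the diagrams of Figure~\ref{fig: weakly dom diagram}), when such vertices are \emph{added} rather than deleted; so your route front-loads work the paper defers, buying a self-contained proof of the lemma at the cost of some duplication with the construction theorem. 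One small caveat both treatments share: for $n=1$ conditions (a) and (b) hold simultaneously, so ``exactly one'' requires $n\ge 2$, a point you at least flag explicitly.
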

\begin{proof}
Comparing the degree conditions in each case, we see that no two of the properties (a)--(e) can simultaneously hold.

Suppose that neither (a) nor (b) holds. Then $d_1 \leq n-2$ and $d_n \geq 1$; hence 
\[1 \geq \Delta_1(d) = 1\cdot 0 + (n-1)\cdot 1 - d_1,\] and $d_1=n-2$. We know $d_2 \neq 1$, since otherwise the terms of $d$ would then sum to an odd number, contradicting our assumption that $d$ is a degree sequence. Letting $p$ denote the number of terms of $d$ exactly equal to $1$, we can now write \[1 \geq \Delta_2(d) = 2 \cdot 1 + (n-2-p)\cdot 2 + p \cdot 1 - (n-2)-d_2.\] Then $n-2 = d_1 \geq d_2 \geq n-1-p$, so $p \geq 1$ and hence $d_n=1$. In fact, if $d_2 < d_1$ then $d_{n-1}=d_n=1$, so (c) holds; if instead $d_{n-1}>d_n$, then $d_2=d_1$ and (d) holds. Finally, if $d_1=d_2=n-2$ and $d_{n-1}=d_n = 1$, then $p \geq 2$. Since $d$ is a degree sequence, we also have $0 \leq \Delta_2(d) = 2-p$; we see that in fact $\Delta_2(d)=0$, and property (e) holds.
\end{proof}

Lemma~\ref{lem: max min degrees} allow us to apply induction on the number of vertices in a weakly threshold graph. Each weakly threshold graph on up to four vertices is either isomorphic to $P_4$ or is a threshold graph, in which case Theorem~\ref{thm: threshold dom/iso} implies that the graph can be built up from $K_1$ by adding dominating and/or isolated vertices.

Suppose now that $G$ is an arbitrary weakly threshold graph on $n \geq 5$ vertices, and that every weakly threshold graph on fewer than $n$ vertices can be built up from $K_1$ or $P_4$ by iteratively adding a vertex or vertices as claimed. It follows from Lemma~\ref{lem: max min degrees} that $G$ may be obtained by adding a dominating, isolated, weakly dominating, or weakly isolated vertex, or adding a semi-joined $P_4$, to a weakly threshold graph on fewer than $n$ vertices, and so by the induction hypothesis $G$ can be built up in the desired way from $K_1$ or $P_4$. This completes our proof that weakly threshold graphs may each be constructed using the operations from Theorem~\ref{thm: char via Construction Algorithm}.

\bigskip
In order to prove the converse, we first present a simplifying lemma. Let $\mathcal{C}$ denote the set of graphs that may be built up from $K_1$ or $P_4$ through the operations described in Theorem~\ref{thm: char via Construction Algorithm}. Call these operations (adding a dominating vertex, an isolated vertex, a weakly dominating vertex, a weakly isolated vertex, or a semi-joined $P_4$) \emph{permissible operations}. Call the addition of a dominating vertex or an isolated vertex a Type 1 operation, call the addition of a weakly dominating vertex or weakly isolated vertex a Type 2 operation, and call the addition of a semi-joined $P_4$ a Type 3 operation. (We will use these terms in Sections~\ref{sec: forb subgr} and~\ref{sec: enum} as well.)

\begin{lem} \label{lem: weaklies placement}
For any element $G$ of $\mathcal{C}$, there exists a sequence of permissible operations that constructs $G$ from $K_1$ or $P_4$ with the property that between any Type 1 operation and a later Type 2 operation, a Type 3 operation is performed.\end{lem}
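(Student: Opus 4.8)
The plan is to reduce the statement to a purely local rearrangement problem and then eliminate bad patterns one at a time. First I would observe that, since the Type 3 operations occur at isolated positions and each maximal run of Type 1 and Type 2 operations uninterrupted by a Type 3 is a contiguous stretch of the construction, the desired property is equivalent to the following: in the list of operations, no Type 1 operation is immediately followed by a Type 2 operation. Indeed, a Type 1 preceding a later Type 2 with no intervening Type 3 means the two lie in a common such run, and a run fails to have all its Type 2 operations before all its Type 1 operations precisely when it contains two adjacent operations forming a Type 1 immediately followed by a Type 2. So it suffices to rewrite a given construction of $G$ until no such adjacent pair remains.

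Next I would prove a commutation lemma: if applying a Type 1 operation to a graph $H$ and then a Type 2 operation yields a graph $H''$, then $H''$ is also obtained from $H$ by a pair of permissible operations that is \emph{not} of the form (Type 1, Type 2). This is a finite case analysis on the added vertices $u$ (dominating or isolated) and $v$ (weakly dominating or weakly isolated), using the same degree bookkeeping as in the proof of Lemma~\ref{lem: max min degrees}. The cases split according to which extremal vertex $v$ skips or meets, and in every case one exhibits the reordered pair directly:
\begin{enumerate}
\item[] a dominating-then-weakly-dominating pair becomes weakly-dominating-then-dominating;
\item[] an isolated-then-weakly-isolated pair becomes weakly-isolated-then-isolated;
\item[] each mixed pair (dominating then weakly isolated, and isolated then weakly dominating) becomes, according to the extremal vertex involved, either two Type 1 operations or two Type 2 operations.
\end{enumerate}
The verifications are routine degree computations; the only subtlety is the treatment of ties in the minimum or maximum degree, together with the edge cases in which $H$ is complete, edgeless, or very small. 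These I would handle by exploiting structural constraints, for instance the fact that an isolated vertex of $H$ prevents any vertex of $H$ from having degree $|V(H)|-1$, which forces the vertex $v$ to be extremal in the reordered construction and legitimizes the claimed Type 2 operations.

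Finally I would drive the rewriting to completion. Each application of the commutation lemma replaces an adjacent (Type 1, Type 2) pair by an equivalent pair while leaving the rest of the construction and all Type 3 positions untouched, so the operation count and block structure are preserved; since each operation adds at least one vertex, any construction of $G$ has at most $|V(G)|$ steps and there are only finitely many to consider, so it is enough to exhibit a strictly decreasing monovariant. The natural candidate is the number of inversions, namely the number of pairs consisting of a Type 1 operation occurring before a Type 2 operation within a common block; the two ``swap'' cases lower this count by exactly one and disturb no other inversion. \textbf{The main obstacle is termination in the type-changing cases}, where a pair collapses to two Type 1 or two Type 2 operations: converting a Type 2 into a Type 1 (or the reverse) can manufacture new inversions with operations further along the block, so the raw count need not drop. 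I expect to resolve this either by processing the pairs in a carefully chosen order (for example, always selecting a pair whose right-hand operation is the last Type 2 of its block, so that a type change destroys rather than creates inversions) or by refining the raw count to a lexicographic measure that dominates these creations; verifying that some such measure is genuinely monotone under all of the reordering outcomes is the crux of the argument.
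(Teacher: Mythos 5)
Your reduction to eliminating adjacent (Type 1, Type 2) pairs is sound, and your commutation lemma is essentially correct (its four cases match the rewrites the paper itself uses: the two ``matched'' pairs swap, and the mixed pairs can be reordered with a change of type). But the proof is incomplete exactly where you say it is: you never produce a measure that decreases under all of the rewrite outcomes, and the raw inversion count genuinely fails. For instance, in a block with pattern (isolated, weakly dominating, weakly isolated), rewriting the first pair to (dominating, isolated) leaves the inversion count at $2$. Worse, with both type-changing outcomes $(1,2)\to(1,1)$ and $(1,2)\to(2,2)$ in play, the type patterns can cycle: $112 \to 122 \to 112$ is a legal sequence of rewrites at the pattern level, so termination cannot be waved through as routine; one would have to prove that the graph-theoretic situations never force such a loop, and nothing in your sketch does this. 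Identifying the crux is not the same as resolving it, so as written this is a genuine gap.

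The paper avoids the issue entirely by never rewriting globally: it inducts on the number $p$ of operations. If the last two operations form a bad pair, only that pair is rewritten, and in all four cases the rewritten pair \emph{ends with a Type 1 operation}; the induction hypothesis is then applied to the graph built by the first $k$ operations (yielding a fresh good sequence for that graph, not a local modification of the old one), and the final Type 1 operation is appended, which can never create a forbidden pattern. The fact that makes this work --- and which would also repair your argument --- is that graphs are only constructed up to isomorphism: in the mixed case (isolated then weakly dominating), the isolated vertices of the intermediate graph are interchangeable by an automorphism, so one may assume the weakly dominating vertex misses the \emph{just-added} isolated vertex, and the pair then always rewrites to two Type 1 operations; your two-Type-2 sub-case is never forced. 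With that observation, the lexicographic measure (number of Type 2 operations in the sequence, then number of inversions) strictly decreases under every rewrite, and your termination argument goes through. Without it, your proposal stops short of a proof.
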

\begin{proof}
We proceed by induction on the number $p$ of permissible operations needed to construct $G$; fix a sequence of operations $O_1,\dots,O_p$ that constructs $G$ from $K_1$ or $P_4$. If $p<2$ then the conclusion holds trivially.

Now suppose that $p=k+1$ for some integer $k \geq 1$, and all graphs in $\mathcal{C}$ that can be constructed from $K_1$ or $P_4$ using $k$ permissible operations can be constructed so that between any Type 1 operation and a later Type 2 operation, there occurs a Type 3 operation.

Let $G'$ be the graph on which the operation $O_{k+1}$ is performed to create $G$; by the induction hypothesis, we may assume that in the construction of $G'$ at least one Type 3 operation occurs between any Type 1 operation and a later Type 2 operation. The conclusion of the lemma holds for $G$ except possibly in the case that $O_{k+1}$ is a Type 2 operation and $O_{k}$ is a Type 1 operation, so assume that $O_k$ and $O_{k+1}$ are operations of these types. Further let $G''$ be the graph on which the operation $O_{k}$ is performed to create $G'$.

If $O_k$ is the addition of an isolated vertex and $O_{k+1}$ is the addition of a weakly isolated vertex, then $G$ can be formed by first adding a weakly isolated vertex to $G''$ (call the resulting graph $G^*$) and then adding an isolated vertex. The induction hypothesis applies to $G^*$, so some sequence of operations creating $G^*$ has Type 3 operations in all the appropriate places; this sequence, followed by adding an isolated vertex, is a sequence of operations creating $G$ that satisfies the claim of the lemma. A similar argument applies if $O_k$ and $O_{k+1}$ are the additions of a dominating vertex and a weakly dominating vertex, respectively.

If $O_k$ is the addition of an isolated vertex and $O_{k+1}$ is the addition of a weakly dominating vertex, then $G$ can be created by adding a dominating vertex to $G''$ (again call the resulting graph $G^*$) and then adding an isolated vertex. As before, we obtain a suitable construction of $G$ by appending the addition of an isolated vertex to a suitable construction of $G^*$. A similar argument handles the case that $O_k$ is the addition of a dominating vertex and $O_{k+1}$ is the addition of a weakly isolated vertex, completing the inductive step.
\end{proof}

We can now prove that every graph in $\mathcal{C}$ is weakly threshold. We proceed by induction on the number of vertices. 

By Theorem~\ref{thm: threshold dom/iso}, any graph in $\mathcal{C}$ on four or fewer vertices is either a threshold graph or $P_4$ and hence must be weakly threshold.

Now suppose that every graph in $\mathcal{C}$ with fewer than $n$ vertices, where $n \geq 5$, is weakly threshold, and let $G$ be a graph in $\mathcal{C}$ with $n$ vertices. Consider a sequence of permissible operations that produces $G$ from $K_1$ or $P_4$ and that has the property described in Lemma~\ref{lem: weaklies placement}. We proceed by cases according to the last-performed operation. Let $d=(d_1,\dots,d_n)$ be the degree sequence of $G$. 

\smallskip
\noindent \emph{Case: The last operation in creating $G$ is a Type 1 operation or a Type 3 operation.} 

Observe that graphs with a dominating vertex, isolated vertex, or semi-joined $P_4$ are all decomposable under $\circ$. Let $d$ be the degree sequence of $G$, and suppose that $G'$ is the graph that the last operation is performed on to yield $G$, and that $d'$ is the degree sequence of $G'$. If the last operation in constructing $G$ is the addition of a dominating vertex, then $d = (0;)\circ d'$. If the last operation is the addition of an isolated vertex, then $d=(;0)\circ d'$. Finally, if the last operation is the addition of a semi-joined $P_4$, then $d=(2,2;1,1)\circ d'$. By the induction hypothesis, $G'$ is a weakly threshold graph, so the first $m(d')$ Erd\H{o}s differences of $d'$ are all $0$ or $1$. Let $s'$ denote the list of these differences, and suppose that $s$ is the list of the first $m(d)$ Erd\H{o}s--Gallai differences of $G$. It follows from Theorem~\ref{thm: concatenate EG diffs} that adding a dominating vertex forms $s$ by inserting a 0 at the beginning of $s'$, adding an isolated vertex yields $s=s'$, and adding a semi-joined $P_4$ forms $s$ by inserting the terms $1,0$ at the beginning of $s'$. In each case, each entry of $s$ is 0 or 1, so $G$ is weakly threshold.

\smallskip
\noindent \emph{Case: The last operation in creating $G$ adds a weakly dominating vertex.} 

Let $v$ denote the added vertex, and let $G'=G-v$. The degree sequence of $G'$ is $d'=(d_2-1,\dots,d_{n-1}-1,d_n)$. By hypothesis, $G'$ is a weakly threshold graph, so the first $m(d')$ Erd\H{o}s differences of $d'$ are all $0$ or $1$.

By Lemma~\ref{lem: weaklies placement}, we may assume that a Type 3 operation was performed after the last Type 1 operation, if any Type 1 operation occurred; if none did, we may assume that the construction of $G$ began with $P_4$. Note now that when the construction algorithm began with $P_4$, or the last Type 3 operation was employed, the immediately resulting graph had minimum degree 1 and maximum degree equal to 2 less than the number of vertices. These properties are preserved by any Type 2 operations that follow, so we may assume that $G'$ has minimum degree 1 and maximum degree $n-3$.

We now compare the corrected Ferrers diagrams of $d$ and of $d'$. Observe that we may obtain $C(d)$ by first taking $C(d')$ and inserting a new first row and column containing $n-1$ copies of 1, as shown in the first diagram in Figure~\ref{fig: weakly dom diagram}. (In the diagrams we have shown a few specific entries, to emphasize the maximum and minimum degree in $d'$.) \begin{figure}
\centering
$\displaystyle  \left[\begin{array}{c;{2pt/2pt}ccccc}
\star & 1 & 1 & \cdots & 1 & 0\\ \hdashline[2pt/2pt]
1 & & & & 1 & 0\\
1 & & & & & \\
\vdots &  &  & C(d') & & \\
1 & & & & & \\
0 & 1 & 0 & & & \\
\end{array}\right] \qquad \qquad \left[\begin{array}{ccccc;{2pt/2pt}c}
 & & & 1 & 0 & 1\\
 & & & & & 0\\
 & & C(d') & & & \vdots\\
 & & & & & 0\\
1 & 0 & & & & 0\\
\hdashline[2pt/2pt]
1 & 0 & \cdots & 0 & 0 & \star
\end{array}\right]$
\caption{Additions to $C(d')$ in the construction of $C(d)$.} \label{fig: weakly dom diagram}
\end{figure}
The last row of $C(d')$ contains a single 1, followed by 0's and a terminal $\star$, and the last column of $C(d')$ contains only 0's and the final $\star$. Thus to complete the creation of $C(d)$ from $C(d')$, we interchange the 0 and the 1 in the last row of the augmented diagram.

Observe that $m(d) = m(d')+1$. Applying Lemma~\ref{lem: EG diff via diagram}, we see that $\Delta_1(d) = 1$, and $\Delta_{i}(d) = \Delta_{i-1}(d')$ for each $i$ such that $2 \leq i \leq m(d)$. It follows that $G$ is a weakly threshold graph.  

\smallskip
\noindent \emph{Case: The last operation in creating $G$ adds a weakly isolated vertex.} 

As before, let $v$ denote the added vertex, and let $G'=G-v$. By the induction hypothesis, $G'$ is a weakly threshold graph, and the first $m(d')$ Erd\H{o}s differences of $d'$ are all $0$ or $1$. By the same argument as in the previous case, $G'$ has minimum degree 1 and maximum degree $n-3$.

We may obtain $C(d)$ in this case by first taking $C(d')$ and appending a new row and column each containing one copy of 1, as shown in the second diagram in Figure~\ref{fig: weakly dom diagram}. As a reminder, the diagram shows the single 0 in the top row of $C(d')$. To create $C(d)$, we interchange the 0 and the 1 in the first row of the augmented diagram.

Observe that $m(d) = m(d')$, and that by Lemma~\ref{lem: EG diff via diagram}, $\Delta_{i}(d) = \Delta_{i}(d')$ for each $i$ such that $1 \leq i \leq m(d)$. It follows that $G$ is a weakly threshold graph, and our proof of Theorem~\ref{thm: char via Construction Algorithm} is complete.

\subsection{Weakly threshold graphs and complementation} \label{subsec: complements}

The iterative construction in Theorem~\ref{thm: char via Construction Algorithm} allows us an easy conclusion about weakly threshold graphs and sequences not necessarily obvious from their definitions. Henceforth, let $\overline{G}$ denote the complement of a graph $G$. Also let $G\vee H$ and $G+H$ denote the join and disjoint union, respectively, of graphs $G$ and $H$. It is easy to see that for any graphs $G$ and $H$, $\overline{G \vee H} \cong \overline{G} + \overline{H}$.

\begin{thm}\label{thm: complementation}
A graph is a weakly threshold graph if and only if its complement is.
\end{thm}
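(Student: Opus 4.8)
The plan is to invoke the iterative-construction characterization of Theorem~\ref{thm: char via Construction Algorithm} and to show that complementation converts any valid construction sequence for $G$ into a valid construction sequence for $\overline{G}$. Concretely, if $G$ is weakly threshold, I would write it as a chain $G_0, G_1, \dots, G_p = G$, where $G_0 \cong K_1$ or $G_0 \cong P_4$ and each $G_i$ arises from $G_{i-1}$ by a permissible operation. I would then argue that $\overline{G_0}, \overline{G_1}, \dots, \overline{G_p} = \overline{G}$ is itself such a chain, with the operation carrying $\overline{G_{i-1}}$ to $\overline{G_i}$ again permissible; by Theorem~\ref{thm: char via Construction Algorithm} this makes $\overline{G}$ weakly threshold, and since $\overline{\overline{G}} \cong G$ the same argument applied to $\overline{G}$ yields the converse.

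The base cases are immediate: $\overline{K_1} \cong K_1$, and $P_4$ is self-complementary, so $\overline{P_4} \cong P_4$. For the operations, the governing fact is the identity $\deg_{\overline{H}}(x) = |V(H)| - 1 - \deg_H(x)$, so that a vertex of minimum degree in $H$ is exactly a vertex of maximum degree in $\overline{H}$, while adjacency is reversed throughout. Hence adding a dominating vertex to $G_{i-1}$ becomes adding an isolated vertex to $\overline{G_{i-1}}$, and vice versa, so Type~1 operations simply trade places under complementation.

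The cases requiring care are the Type~2 and Type~3 operations. Suppose $G_i$ is obtained from $G_{i-1}$ by adding a weakly dominating vertex $v$, so $v$ is adjacent to every vertex of $G_{i-1}$ except a single vertex $u$ of minimum degree. In $\overline{G_i}$ the vertex $v$ is then adjacent only to $u$, and $u$ has maximum degree in $\overline{G_{i-1}}$; thus $v$ is weakly isolated, and $\overline{G_i}$ arises from $\overline{G_{i-1}}$ by a permissible Type~2 operation (the weakly isolated case is symmetric). For a semi-joined $P_4$ on a vertex set $P$, complementation sends the induced $P_4$ to an induced $P_4$ (again using that $P_4$ is self-complementary) while interchanging the roles of midpoints and endpoints; since the midpoints, adjacent to all of $V(G_{i-1})$, become nonadjacent to $V(\overline{G_{i-1}})$, and the endpoints become adjacent to all of it, the resulting induced $P_4$ is again semi-joined. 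Thus Type~3 operations map to Type~3 operations.

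I expect the main obstacle to be purely bookkeeping: verifying at each step that the degree and adjacency conditions defining \emph{weakly dominating}, \emph{weakly isolated}, and \emph{semi-joined $P_4$} transfer correctly under complementation, keeping track of which vertex plays which role (minimum versus maximum degree, midpoint versus endpoint). Once the degree-complementation identity is in hand this is routine, so the only real content is the clean pairing of operations: dominating $\leftrightarrow$ isolated, weakly dominating $\leftrightarrow$ weakly isolated, and semi-joined $P_4 \leftrightarrow$ semi-joined $P_4$.
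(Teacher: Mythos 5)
Your proposal is correct and follows essentially the same argument as the paper: induction along the construction sequence of Theorem~\ref{thm: char via Construction Algorithm}, using that $K_1$ and $P_4$ are self-complementary and that complementation pairs the operations as dominating $\leftrightarrow$ isolated, weakly dominating $\leftrightarrow$ weakly isolated, and semi-joined $P_4 \leftrightarrow$ semi-joined $P_4$ (with midpoints and endpoints exchanging roles).
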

\begin{proof}
The result follows by induction on the number of addition operations needed to construct a weakly threshold graph; first note that $K_1$ and $P_4$ are self-complementary. Adding a dominating vertex to a graph $G$ has the effect of simultaneously adding an isolated vertex to $\overline{G}$, i.e., $\overline{G \vee K_1} \cong \overline{G} + K_1$. Similarly, $\overline{G + K_1} \cong \overline{G} \vee K_1$, and additions of weakly dominating vertices and weakly isolated vertices have the same relationship. Finally, because $\overline{P_4} \cong P_4$, and complementation changes endpoints to midpoints and vice versa, adding a semi-joined $P_4$ to a graph has the effect of adding a semi-joined $P_4$ to the complement. Thus a graph can iteratively be constructed using these types of operations if and only if its complement can.
\end{proof}

\section{A forbidden subgraph characterization} \label{sec: forb subgr}

In this section we show that the weakly threshold graphs form a hereditary graph class, i.e., the property of being a weakly threshold graph is preserved under taking induced subgraphs. This allows us to characterize these graphs in terms of a collection of minimal forbidden induced subgraphs, and it reveals a connection between weakly threshold graphs and interval graphs.

Given a graph $F$, we say that a graph $G$ is \emph{$F$-free} if no induced subgraph of $G$ is isomorphic to $F$. If $\mathcal{F}$ is a collection of graphs, then $G$ is \emph{$\mathcal{F}$-free} if $G$ is $F$-free for every element $F$ of $\mathcal{F}$. Let $2K_2$ denote $K_2+K_2$, and let $H$ and $S_3$ respectively denote the unique split graphs with degree sequences $(3,3,1,1,1,1)$ and $(3,3,3,1,1,1)$.

\begin{thm} \label{thm: forb subgr}
A graph $G$ is a weakly threshold graph if and only if $G$ is $\{2K_2,C_4,C_5,H,\overline{H}, S_3,\overline{S_3}\}$-free.
\end{thm}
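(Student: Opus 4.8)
The plan is to establish the forbidden induced subgraph characterization by leveraging the hereditary nature of weakly threshold graphs together with the iterative construction from Theorem~\ref{thm: char via Construction Algorithm}. First I would prove the forward direction: that every weakly threshold graph is $\{2K_2,C_4,C_5,H,\overline{H},S_3,\overline{S_3}\}$-free. To do this I would first argue that the class of weakly threshold graphs is hereditary. The cleanest route is to show that deleting any single vertex from a weakly threshold graph leaves a weakly threshold graph; by the structural dichotomy of Lemma~\ref{lem: max min degrees}, the vertex deletions that peel off dominating, isolated, weakly dominating, weakly isolated vertices, or disassemble a semi-joined $P_4$ already preserve the property, but one must handle deletion of an \emph{arbitrary} vertex. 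I would exploit the canonical decomposition of Theorem~\ref{thm: canon decomp} and the concatenation result of Theorem~\ref{thm: concatenate EG diffs}: deleting a vertex from one indecomposable component changes only that component's contribution to the list of Erd\H{o}s--Gallai differences, so it suffices to verify heredity within each indecomposable piece, which by Lemma~\ref{lem: max min degrees} has a very restricted form. Once heredity is established, the forward direction reduces to checking directly that none of the seven listed graphs is itself weakly threshold, which is a finite verification: for each forbidden graph $F$ I would compute $m(F)$ and the Erd\H{o}s--Gallai differences $\Delta_k(F)$ for $k \leq m(F)$ and exhibit some $k$ with $\Delta_k(F) \geq 2$ (equivalently, use Lemma~\ref{lem: EG diff via diagram} to count $1$'s below versus right of a star in $C(F)$).

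The harder direction is the converse: that every $\{2K_2,C_4,C_5,H,\overline{H},S_3,\overline{S_3}\}$-free graph is weakly threshold. My plan is to prove the contrapositive constructively, showing that any graph that is \emph{not} weakly threshold must contain one of the seven graphs as an induced subgraph. I would take a minimal counterexample $G$ — a non-weakly-threshold graph that is $\mathcal{F}$-free and has the fewest vertices — and derive a contradiction. Since the forbidden family is self-complementary (note $\overline{H}$ and $\overline{S_3}$ appear explicitly, while $2K_2$, $C_4$, $C_5$, $H\cup\overline H$, $S_3\cup\overline{S_3}$ are closed under complementation), and since weakly threshold graphs are closed under complementation by Theorem~\ref{thm: complementation}, I can freely pass to the complement whenever convenient; this symmetry should roughly halve the case analysis. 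The key is to show that a $\{2K_2,C_4,C_5\}$-free graph already has strong structure: these three forbidden subgraphs are exactly the obstructions making $G$ a split graph with an ordered, ``nested'' neighborhood structure (indeed $2K_2$-, $C_4$-, $C_5$-free graphs are the split graphs among a well-understood family, closely tied to threshold-like nesting).

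The main obstacle I anticipate is the converse direction, specifically bridging from the local forbidden-subgraph conditions to the global Erd\H{o}s--Gallai difference inequality $\Delta_k(G) \leq 1$. My strategy there is to use the structure forced by $\{2K_2,C_4,C_5\}$-freeness to recognize a permissible peeling operation from Theorem~\ref{thm: char via Construction Algorithm}: I would identify a vertex (or a semi-joined $P_4$) in $G$ whose removal both preserves $\mathcal{F}$-freeness and corresponds to one of the five permissible operations in reverse, then invoke minimality so that $G$ is built from a weakly threshold graph by a permissible operation and hence is itself weakly threshold — a contradiction. The delicate point is proving that such a peelable vertex always exists in an $\mathcal{F}$-free graph; this is where the additional forbidden graphs $H$, $\overline{H}$, $S_3$, $\overline{S_3}$ earn their place, ruling out the small configurations where a split, $\{2K_2,C_4,C_5\}$-free graph would otherwise fail to admit a permissible reduction. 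I expect the heart of the argument to be a careful examination of the vertices of extreme degree: showing that the maximum-degree and minimum-degree vertices behave as (weakly) dominating or (weakly) isolated vertices, or together with two others form a semi-joined $P_4$, unless one of the seven small obstructions is present as an induced subgraph.
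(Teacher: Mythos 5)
Your converse direction is essentially the paper's own argument and is sound as a plan: induction on the number of vertices (your ``minimal counterexample''), splitness from $\{2K_2,C_4,C_5\}$-freeness via F\"{o}ldes--Hammer, complementation symmetry through Theorem~\ref{thm: complementation}, and a case analysis of the extreme-degree vertices in which induced copies of $H$ and $S_3$ supply the contradictions, ending with a permissible peeling operation and an appeal to Theorem~\ref{thm: char via Construction Algorithm}. What is missing there is only the case-checking detail, which is exactly the work the paper carries out.

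The genuine gap is in your forward direction, which rests entirely on heredity of the weakly threshold class. You propose to prove heredity by using Theorems~\ref{thm: canon decomp} and~\ref{thm: concatenate EG diffs} to reduce to deleting a vertex from a single indecomposable component, ``which by Lemma~\ref{lem: max min degrees} has a very restricted form.'' But this reduction merely restates the problem: heredity in general is equivalent to heredity for indecomposable weakly threshold graphs, and Lemma~\ref{lem: max min degrees} only licenses deleting a vertex of \emph{extreme} degree (a dominating, isolated, weakly dominating, or weakly isolated vertex, or a semi-joined $P_4$). It says nothing about deleting an arbitrary vertex, and no result in the paper controls how the Erd\H{o}s--Gallai differences change under such a deletion (every neighbor of the deleted vertex loses a degree, so the differences do not move monotonically; Lemma~\ref{lem: EG diffs preserved upwards} concerns unit transformations, not vertex deletions). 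An inductive repair along the construction algorithm also breaks down: if $w$ was added as a weakly isolated vertex pendant to the maximum-degree vertex $x$, then deleting a vertex $u$ adjacent to $x$ but not to some other vertex of nearly maximum degree can leave $x$ without maximum degree in $G-u$, so re-adding $w$ is no longer a permissible operation. In short, the heredity statement you assume is as hard as the theorem itself---in the paper, heredity is a \emph{corollary} of Theorem~\ref{thm: forb subgr}, not an ingredient. The paper instead proves the forward direction by inducting along the construction of Theorem~\ref{thm: char via Construction Algorithm}, showing each permissible operation preserves $\mathcal{F}$-freeness: Type 1 and Type 3 operations because every graph in $\mathcal{F}$ is indecomposable while these operations produce compositions, and Type 2 operations by a direct argument that an induced $H$ or $S_3$ through the new vertex forces an induced $H$ or $S_3$ already present in the smaller graph, with complementation handling $\overline{H}$ and $\overline{S_3}$. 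To salvage your plan you would need to either supply the missing heredity argument for indecomposable weakly threshold graphs or restructure the forward direction along these lines.
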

\begin{proof}
In the following, let $\mathcal{F} = \{2K_2,C_4,C_5,H,\overline{H}, S_3,\overline{S_3}\}$. 

Suppose first that $G$ is a weakly threshold graph. By Theorem~\ref{thm: WT graphs are split}, $G$ is a split graph. Since all split graphs are $\{2K_2,C_4,C_5\}$-free (this was proved by F\"{o}ldes and Hammer in~\cite{FoldesHammer76}), $G$ induces none of these three subgraphs. 

By Theorem~\ref{thm: char via Construction Algorithm} we know that there is a sequence of operations $\mathcal{O}_1,\dots,\mathcal{O}_p$ of Types 1, 2, or 3 (as defined in the previous section) that create $G$ from $K_1$ or $P_4$. We prove that $G$ is $\mathcal{F}$-free by induction on $p$. Observe that if $p=0$, then $G$ is $K_1$ or $P_4$, both of which are $\mathcal{F}$-free.

Suppose now that $p=k+1$ for some nonnegative integer $k$, and assume that every weakly threshold graph that can be constructed from $K_1$ or $P_4$ via a sequence of $k$ addition operations is $\mathcal{F}$-free. Let $G'$ be the graph from which $G$ is created by applying the operation $\mathcal{O}_p$. By assumption, $G'$ is $\mathcal{F}$-free.

If $\mathcal{O}_p$ is a Type 1 or Type 3 operation, then $G = (G_1,A_1,B_1) \circ G'$, where $G_1$ is isomorphic to $K_1$ or $P_4$. Since $G_1$ and $G'$ are both $\mathcal{F}$-free, and we can verify that every graph in $\mathcal{F}$ is indecomposable, it follows that $G$ is $\mathcal{F}$-free as well.

Suppose instead that $\mathcal{O}_p$ is a Type 2 operation, and that $v$ is the vertex that is added to $G'$ to create $G$. Suppose also to the contrary that $G$ does induce an element of $\mathcal{F}$ other than $2K_2$, $C_4$, or $C_5$. Since this induced subgraph was not present in $G'$, it must contain the vertex $v$. Let $A,B$ be a partition of $V(G)$ into an independent set and a clique, respectively.

If $G$ contains an induced subgraph $F$ isomorphic to $H$, then the vertices of degree 1 in $F$ must belong to $A$, and the two other vertices belong to $B$. In the operation $\mathcal{O}_p$ the vertex $v$ cannot have been a weakly dominating vertex, since $v$ would have to be a dominating or weakly dominating vertex in $F$, and $H$ has no such vertex. Thus $v$ is a weakly isolated vertex in $G$ and hence is one of the vertices of $F$ in $A$. Let $w$ denote the neighbor of $v$ in $F$, and let $x$ denote the other vertex of degree 3 in $F$. Since $v$ was added to $G'$ as a weakly isolated vertex, this implies that $w$ was a vertex of maximum degree in $G'$, so in $G$ the degree of $w$ is larger than the degree of $x$. Since in $F$ the vertex $x$ has the same degree as $w$, the vertex $w$ must have a neighbor $y$ that $x$ does not; this vertex must belong to $A$, along with the vertices of degree 1 in $F$. However, the vertex $y$, together with the vertices of $F-v$, then induces $H$ in $G'$, a contradiction to the induction hypothesis.

If instead $G$ contains an induced subgraph $F$ isomorphic to $S_3$, then again the vertices of degree 1 in $F$ belong to $A$, while the vertices of degree 3 belong to $B$. Since $F$ contains no dominating or weakly dominating vertex, then as before, vertex $v$ was added during $\mathcal{O}_p$ as a weakly isolated vertex, so $v$ is one of the vertices of $F$ in $A$. Let $w$ be the neighbor of $v$ in $F$, and denote the other vertices of $F$ in $B$ by $x$ and $x'$. Since $v$ was added as a weakly isolated vertex in $\mathcal{O}_p$, vertex $w$ has a higher degree in $G$ than do $x$ or $x'$. Since in $F$ the vertices $x$ and $x'$ have the same degree as $w$, each of these vertices must be non-adjacent to some neighbor of $w$ other than $v$. If some neighbor $y$ of $w$ other than $v$ is non-adjacent to both $x$ and $x'$, then $y$, together with the vertices of $F-v$, induces $S_3$ in $G'$, a contradiction to the induction hypothesis. Otherwise, $w$ has neighbors $y$, which is adjacent to $x$ but not $x'$, and $y'$, which is adjacent to $x'$ but not $x$. However, the vertices $y,y',w,'w$, and the two vertices non-adjacent to $w$ in $F$ then induce $H$ in $G-v$, which is another contradiction.

Finally, if $G$ contains an induced subgraph $F$ isomorphic to $\overline{H}$ or $\overline{S_3}$, then by Theorem~\ref{thm: complementation} we can apply the arguments of the last two paragraphs to $\overline{G}$, which must contain $H$ or $S_3$, to arrive at a similar contradiction. From these contradictions in every case we conclude that $G$ is $\mathcal{F}$-free, and in fact all weakly threshold graphs are as well.

We now prove that all $\mathcal{F}$-free graphs are weakly threshold graphs. We do this by induction on the number $n$ of vertices in an arbitrary $\mathcal{F}$-free graph $G$. Observe that all $\mathcal{F}$-free graphs on at most four vertices are threshold graphs or are isomorphic to $P_4$; any such graph is a weakly threshold graph.

Suppose now that $n \geq 5$, and assume that every $\mathcal{F}$-free graph with fewer than $n$ vertices is a weakly threshold graph. Note that if $G$ contains a dominating or isolated vertex $v$, then by the induction hypothesis the graph $G-v$ is a weakly threshold graph. As such it can be constructed by a sequence of operations as described in Theorem~\ref{thm: char via Construction Algorithm}; if we append to this sequence the addition of $v$ to the graph (a Type 1 operation), then Theorem~\ref{thm: char via Construction Algorithm} implies that $G$ is a weakly threshold graph as well. A similar conclusion holds if $G$ contains a semi-joined $P_4$. Suppose now that $G$ has no dominating or isolated vertex, and no semi-joined $P_4$. 

Observe that since $G$ is $\{2K_2,C_4,C_5\}$ free, $G$ is split (see~\cite{FoldesHammer76}). Fix a partition $A,B$ of $V(G)$ into an independent set and a clique, respectively.

Let $v$ be a vertex of maximum degree in $G$; since $G$ is split, we may assume that $v$ is an element of $B$. We claim that the degree of $v$ is $n-2$. If this is not the case, then $v$ is non-adjacent to at least two vertices $w_1$ and $w_2$, which must belong to $A$. Since $G$ has no isolated vertices, the vertices $w_1$ and $w_2$ each have a neighbor in $B$. If they have a common neighbor $x$, then since $v$ had maximum degree, $v$ must have two neighbors that $x$ is not adjacent to. These two neighbors then must belong to $A$, and together with $v,w_1,w_2,x$ induce a subgraph isomorphic to $H$, a contradiction. If instead $w_1$ and $w_2$ have no common neighbor, then $w_1$ is adjacent to $x_1$, and $w_2$ is adjacent to $x_2$ for some $x_1,x_2 \in B$. Since $v$ has the maximum degree in $G$, this implies that $x_1$ and $x_2$ each have a non-neighbor among the neighbors of $v$. If $x_1$ and $x_2$ have such a non-neighbor in common, then this vertex and vertices  $v,w_1,w_2,x_1,x_2$ induce a copy of $S_3$ in $G$, a contradiction. Thus $v$ has a neighbor $y_1$ adjacent to $x_1$ but not $x_2$, and a neighbor $y_2$ adjacent to $x_2$ but not $x_1$. However, then the vertices $w_1,w_2,x_1,x_2,y_1,y_2$ together induce a copy of $H$, again a contradiction. In light of all these contradictions, we conclude that $v$ cannot have two non-neighbors in $G$; hence, $v$ has degree $n-2$.

Let $z$ be a vertex of minimum degree in $G$. Note that $z$ is a vertex of maximum degree in the complement $\overline{G}$, and $\overline{G}$ is also split and $\mathcal{F}$-free since $\mathcal{F}$ is closed under taking complements. Furthermore, $\overline{G}$ also cannot contain any dominating or isolated vertices. Thus the arguments above show that $z$ has degree $n-2$ in $\overline{G}$ and hence $z$ has degree $1$ in $G$.

If $G$ has two vertices $v,v'$ of degree $n-2$ and two vertices $z,z'$ of degree 1, then we can verify that either $G$ is isomorphic to $P_4$ or the subgraph induced by $v,v',z,z'$ is a semi-joined $P_4$ in $G$. Since both these possibilities have already been handled previously, we assume now that $G$ has either a unique vertex of degree $n-2$ or a unique vertex of degree $1$.

Suppose that $G$ has a unique vertex $v$ of maximum degree $n-2$. If $v$ is adjacent to any vertex $z$ of degree 1, then we may obtain $G$ from the graph $G-z$ by adding a weakly dominating vertex (namely, attaching vertex $z$ to $v$). If $v$ is not adjacent to any vertex of degree 1, then the minimum degree of $G-v$ is still 1, and we may obtain $G$ again by adding a weakly dominating vertex.

We may apply similar arguments to the $\mathcal{F}$-free graph $\overline{G}$; we conclude that since $G$ has either a unique vertex of degree $n-2$ or a unique vertex of degree 1, $G$ may be obtained from $G-w$, where $w$ is some vertex of $G$, via a Type 2 operation. Since $G-w$ is $\mathcal{F}$-free, by the induction hypothesis $G-w$ is a weakly threshold graph and thus Theorem~\ref{thm: char via Construction Algorithm} applies; if we append the Type 2 operation that replaces $w$ to the sequence of permissible operations that constructs $G-w$, we see that $G$ can also be constructed from $K_1$ or $P_4$ by a sequence of permissible operations, implying that $G$ is weakly threshold.
\end{proof}

Interestingly, the list $\mathcal{F}$ of forbidden subgraphs is strikingly similar to that of another hereditary family. Let $\mathcal{H}$ denote the class of graphs that are both interval graphs and complements of interval graphs. As noted in~\cite{ISGCI}, this class is equivalent to the class of split permutation graphs and is precisely the class of $\{2K_2,C_4,C_5,S_3,\overline{S_3},\text{rising sun},\text{co-rising sun}\}$-free, where the rising sun and co-rising sun graphs are shown in Figure~\ref{fig: rising sun}. 
\begin{figure}
\centering
\includegraphics[width=0.5\textwidth]{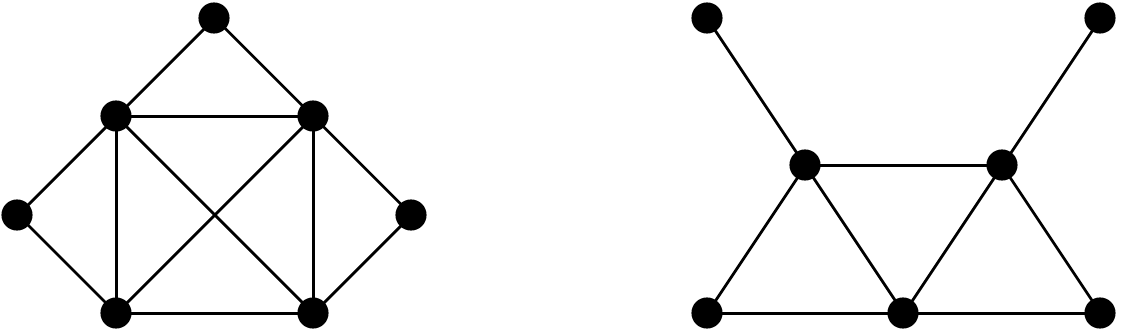}
\caption{The rising sun graph and its complement.} \label{fig: rising sun}
\end{figure}
Note that $H$ and $\overline{H}$ can be obtained by deleting a single vertex from the rising sun graph and from its complement, respectively. Hence the weakly threshold graphs form a notable subclass of $\mathcal{H}$.

\section{Enumeration} \label{sec: enum}


In this section we count both the weakly threshold sequences and the weakly threshold graphs of order $n$. Our approach, which is essentially the same technique used by Tyshkevich in~\cite{Tyshkevich84} in enumerating matrogenic and matroidal graphs, will use the canonical decomposition as a convenient framework. For both sequences and graphs, we begin by finding recurrences that are satisfied respectively by the number of $n$-term sequences and $n$-vertex graphs in question. Using the generating functions of these sequences,  together with the structure imposed in both contexts by the canonical decomposition, we obtain generating functions for the number of weakly threshold sequences and graphs, which we denote by $S(x)$ and by $W(x)$, respectively.

For $n \geq 4$, let $g_n$ be the number of indecomposable weakly threshold sequences with $n$ terms, and let $h_n$ be the number of indecomposable weakly threshold graphs with $n$ vertices. If \[G(x) = 2x + \sum_{k=4}^\infty g_kx^k  \qquad \text{ and } \qquad H(x) = 2x  + \sum_{k=4}^\infty h_k x^k,\] then $G(x)$ and $H(x)$ are the generating functions for the number of splitted indecomposable weakly threshold sequences and graphs, respectively, indexed by the number of terms or vertices. Note that in both equations the coefficient of $2$ in front of $x$ reflects that in a canonical component having a single vertex, this vertex may belong to either the clique or the independent set in the splitted graph.

We now discuss weakly threshold graphs, though analogous arguments apply to weakly threshold sequences. The sequence counting the $n$-vertex weakly threshold graphs having exactly $k$ canonically indecomposable components has generating function $W_k(x)$ given by \[W_k(x) = H(x)^{k-1}(H(x)-x),\] since as the distributive law is applied to the product of sums, the coefficient on the resulting term with degree $n$ counts the ways to choose the $k$ indecomposable components for the canonical decomposition. Note here that the last factor in the expression above is $H(x)-x$, since if the rightmost component in the canonical decomposition is isomorphic to $K_1$, the isomorphism class is the same whether the vertex is in the clique or the independent set of the splitted graph.

Summing the functions $W_k(x)$, we arrive at the generating function $W(x)$ for the number of weakly threshold graphs:
\begin{equation}\label{eq: W(x)}
W(x) = \sum_{k=1}^\infty W_k(x) = \sum_{k=1}^\infty H(x)^{k-1}(H(x)-x) = \frac{H(x)-x}{1-H(x)}.\end{equation}
Similarly, \begin{equation}\label{eq: S(x)}
S(x) = \frac{G(x)-x}{1-G(x)}.\end{equation}

(Here and elsewhere we assume that $x$ belongs to some suitable interval of convergence.) In Sections~\ref{subsec: sequences} and~\ref{subsec: graphs} we will derive expresions for $G(x)$ and $H(x)$, respectively. Then, in Section~\ref{subsec: conclusion}, we will use~\eqref{eq: W(x)} and~\eqref{eq: S(x)} to obtain the  generating functions $S(x)$ and $G(x)$ and comment on the numbers of weakly threshold sequences and graphs.

\subsection{Indecomposable weakly threshold sequences} \label{subsec: sequences}

In order to determine $G(x)$ we derive a recurrence for the sequence $(g_n)$. By direct observation we see that $g_4=1$, since $(2,2,1,1)$ is the unique indecomposable weakly threshold sequence with four terms. It follows from Theorem~\ref{thm: char via Construction Algorithm} that for any $n \geq 5$ we may obtain any $n$-term indecomposable weakly threshold sequence by choosing an $(n-1)$-term indecomposable weakly threshold sequence and either (i) appending a 1 to the end of the sequence and increasing the first term of the sequence by 1, or (ii) increasing the first $n-2$ terms of the sequence by 1 and then inserting another copy of the value $n-2$ at the beginning of the sequence. Thus $g_n = 2g_{n-1}$ and hence $g_n = 2^{n-4}$ for $n \geq 4$. We see that

\begin{equation}
\label{eq: G(x)}
G(x) = 2x + \sum_{k=4}^\infty 2^{k-4}x^k =2x + \frac{x^4}{1-2x} = \frac{2x - 4x^2+x^4}{1-2x}.
\end{equation}

\subsection{Indecomposable weakly threshold graphs} \label{subsec: graphs}

We now count the isomorphism classes of indecomposable weakly threshold graphs on $n$ vertices. The arguments here will be more intricate than in Section~\ref{subsec: sequences}, since more than one weakly threshold graph can have the same degree sequence. In order to obtain a recurrence for $h_n$, we will first need stronger results on the iterative construction of indecomposable weakly threshold graphs, which we present in Lemmas~\ref{lem: constructing indecomp WT} and~\ref{lem: module cases}. As in the last section, we then use the recurrence to derive a closed form expression for the generating function $H(x)$.

As a preliminary step, we recall a characterization of indecomposable graphs due to the author and West.

\begin{lem}[{{\cite[Theorem 3.2]{BarrusWest12}}}] \label{lem: A4struc}
A graph $G$ is canonically indecomposable if and only if for every pair $u, v$ of vertices there is a sequence $A_1,\dots,A_k$ of 4-element vertex subsets of $G$ such that $u$ and $v$ belong to $A_1$ and $A_k$, respectively, consecutive subsets in the sequence have nonempty intersection, and each $A_i$ is the vertex set of an induced subgraph isomorphic to $2K_2$, $C_4$, or $P_4$.
\end{lem}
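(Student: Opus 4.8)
The plan is to recast the chain condition as a connectivity statement and then prove the resulting equivalence in two halves, of which only the second is substantial. First I would fix terminology: call a $4$-element vertex set that induces $2K_2$, $C_4$, or $P_4$ a \emph{good quad}, and let $\Gamma(G)$ be the graph on $V(G)$ in which two vertices are adjacent exactly when some good quad contains both. Unwinding the definitions, the existence of a chain $A_1,\dots,A_k$ as in the statement is precisely the assertion that $u$ and $v$ lie in the same component of $\Gamma(G)$ (a chain of overlapping quads is a walk in $\Gamma(G)$, and conversely). So, setting aside the vacuous case $|V(G)|\le 1$, it suffices to show that $G$ is canonically indecomposable if and only if $\Gamma(G)$ is connected.

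For the implication ``$\Gamma(G)$ connected $\Rightarrow G$ indecomposable'' I would prove the contrapositive. Suppose $G=(G[X],A,B)\circ G[Y]$ is a nontrivial composition, so $A$ is independent and joined to none of $Y$ while $B$ is a clique joined to all of $Y$. The key subclaim is that no good quad meets both $A$ and $Y$: if a quad contained $a\in A$ and $y\in Y$, then $a$ is nonadjacent to $y$ and to every other $A$-vertex of the quad, while each $B$-vertex of the quad is adjacent to every $Y$-vertex and every other $B$-vertex; checking the six ways to place the remaining two vertices among $A$, $B$, $Y$ shows the induced subgraph always contains an isolated vertex or a vertex of degree $3$, and none of $2K_2$, $C_4$, $P_4$ has either. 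Since $\{2K_2,C_4,P_4\}$ is closed under complementation and $\overline G=(\overline{G[X]},B,A)\circ\overline{G[Y]}$ is a composition with the roles of $A$ and $B$ reversed, applying the subclaim inside $\overline G$ (whose good quads are exactly those of $G$) shows that no good quad meets both $B$ and $Y$. Hence every good quad lies inside $X$ or inside $Y$, so $\Gamma(G)$ has no edge across the partition $X\mid Y$ and is disconnected.

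For the converse I would again argue contrapositively, assuming $\Gamma(G)$ is disconnected and aiming to exhibit a nontrivial composition. Take a component $Q$ of $\Gamma(G)$, put $P=V(G)\setminus Q$, and note that both parts are nonempty and no good quad meets both. The first structural consequence, which I would establish by a short case check on four vertices split two-and-two, is that the bipartite graph of $P$--$Q$ edges is a \emph{chain graph}: if vertices $p_1,p_2\in P$ and $q_1,q_2\in Q$ had $p_1q_1,p_2q_2$ edges but $p_1q_2,p_2q_1$ nonedges, then according to whether $p_1p_2$ and $q_1q_2$ are edges the set $\{p_1,p_2,q_1,q_2\}$ induces $2K_2$, $P_4$, $P_4$, or $C_4$, in each case a good quad crossing $P\mid Q$. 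Thus the neighborhoods of the $P$-vertices inside $Q$ are linearly ordered by inclusion.

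The hard part will be upgrading this chain structure to an actual composition, because a chain graph is strictly weaker than the uniform join/anti-join a composition demands: the cut $P\mid Q$ itself need not be a composition, and (as one already sees when $Q$ is a clique) whether the ``full'' $P$-vertices form a clique can depend on the internal structure of $Q$. My plan for this step is an extremal/inductive argument: combine the cross chain with the forbidden good quads living inside $P$ and inside $Q$ to locate an extreme vertex---one adjacent to all of the opposite side or to none of it, and assignable consistently to a clique or an independent set---then peel it off and induct on $|V(G)|$, using the existence and uniqueness of the canonical decomposition (Theorem~\ref{thm: canon decomp}) to organize the pieces, until a genuine nontrivial composition emerges and witnesses that $G$ is decomposable. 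This passage from the local chain condition to the global split is where essentially all the content of the lemma resides; the reformulation via $\Gamma(G)$ and the two reductions above are routine by comparison.
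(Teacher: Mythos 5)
The first thing to say is that the paper itself never proves this lemma: it is imported verbatim from \cite{BarrusWest12}, so there is no internal proof to compare against and your argument has to stand entirely on its own. Half of it does. The reformulation via $\Gamma(G)$ is exactly right, and your proof that a decomposable graph has disconnected $\Gamma(G)$ is complete and correct: the placement check (every quad meeting both $A$ and $Y$ contains an isolated vertex or a vertex of degree $3$, while no graph in $\{2K_2,C_4,P_4\}$ has either) is valid, and the complementation trick, using $\overline{2K_2}\cong C_4$, $\overline{P_4}\cong P_4$ and $\overline{(G[X],A,B)\circ G[Y]}=(\overline{G[X]},B,A)\circ\overline{G[Y]}$, correctly disposes of quads meeting $B$ and $Y$. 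This settles the implication ``chain condition $\Rightarrow$ indecomposable.''

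The genuine gap is the converse, and you flag it yourself: after the (correct) observation that the cross edges form a chain graph, everything is a plan rather than an argument, and the plan as stated does not go through. The proposed induction would need two things you never supply: a proof that the ``extreme'' vertex can be assigned to a clique or independent side of some composition, and a proof that a composition of the peeled graph lifts back to one of $G$. The second is false in general, which is precisely why this direction is hard. Take the bull: triangle $xyz$ with pendants $u$ on $x$ and $v$ on $y$. Its unique good quad is $\{u,x,y,v\}$ (inducing $P_4$), so $\Gamma$ has components $\{u,x,y,v\}$ and $\{z\}$, and the bull is indeed decomposable, namely as $(P_4,\{u,v\},\{x,y\})\circ K_1$. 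Your plan peels the extreme vertex $u$ (anticomplete to the opposite side $\{z\}$), leaving the paw, which is decomposable; but the paw's compositions with bottom component $\{x\}$, or with top component $\{y\}$, do \emph{not} extend when $u$ (adjacent only to $x$) is restored --- only the composition with bottom $\{z\}$ lifts. So the inductive step requires showing that a suitably chosen composition of $G-v$ always exists and lifts (or constructing the cut of $G$ directly from the chain structure), and that argument --- essentially all of the cited theorem of Barrus and West --- is absent. As it stands, the proposal proves one direction of the equivalence and only sketches the other.
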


\begin{lem}\label{lem: constructing indecomp WT}
A graph $G$ is a canonically indecomposable weakly threshold graph if and only if $G$ is isomorphic to $K_1$ or can be obtained by beginning with a graph isomorphic to $P_4$ and iteratively performing a sequence of Type 2 operations.
\end{lem}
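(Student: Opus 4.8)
The plan is to reduce everything to a criterion for canonical indecomposability phrased in Erd\H{o}s--Gallai differences, and then to read the construction off that criterion. Since a weakly threshold graph is split (Theorem~\ref{thm: WT graphs are split}), its difference list $(\Delta_1(d),\dots,\Delta_m(d))$, with $m=m(d)$, consists of $0$'s and $1$'s and ends in $\Delta_m(d)=0$; call $\Delta_j(d)=0$ with $j<m$ an \emph{interior zero}. I would first prove that an interior zero forces a decomposition. Writing $S=\{1,\dots,j\}$ for the $j$ highest-degree vertices, the identity $\sum_{i\le j}d_i = 2e(S)+\sum_{i>j}b_i$, where $e(S)$ counts edges inside $S$ and $b_i$ the neighbours of $i$ in $S$, together with $e(S)\le\binom{j}{2}$ and $b_i\le\min\{j,d_i\}$, recovers the bound of Theorem~\ref{thm: Erdos Gallai}; the equality $\Delta_j(d)=0$ then forces $S$ to be a clique and $b_i=\min\{j,d_i\}$ for every $i>j$. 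Hence every $i>j$ with $d_i\ge j$ is joined to all of $S$, and every $i>j$ with $d_i<j$ has all its neighbours inside $S$. Taking $B_1=S$, $A_1=\{i>j:d_i<j\}$, and $Y=\{i>j:d_i\ge j\}$, one checks that $A_1$ is independent and nonadjacent to $Y$, that $B_1$ is completely joined to $Y$, and that $Y\ni m$ is nonempty (as $d_m\ge m-1\ge j$), so that $G=(G[A_1\cup B_1],A_1,B_1)\circ G[Y]$ is a nontrivial composition.

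For the converse I would invoke Theorem~\ref{thm: concatenate EG diffs} and the fact (Theorem~\ref{thm: forb subgr}) that weakly threshold graphs are hereditary: in a canonical decomposition every component is again weakly threshold, hence split, so the block of differences it contributes ends in $0$ (for the rightmost component because a split graph has $\Delta_m=0$, and for a splitted component with clique of size $m_i$ because a short calculation gives $\Delta_{m_i}=0$). If $G$ has no isolated vertex, then it has no independent-singleton component, so every component contributes a nonempty block, and two or more components would make the final $0$ of the leftmost block an interior zero of $G$. Thus, for graphs with no isolated vertex, indecomposability is \emph{equivalent} to the absence of interior zeros.

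I would then finish by induction on $|V(G)|$. The cases $|V(G)|\le 4$ are immediate, the only indecomposable weakly threshold graphs there being $K_1$ and $P_4$. For $|V(G)|\ge5$, Lemma~\ref{lem: max min degrees} puts $G$ in one of cases (a)--(e); cases (a), (b), (e) each produce an isolated vertex, a dominating vertex, or a semi-joined $P_4$, and hence a decomposition, leaving (c) and (d), which are interchanged by complementation (Theorem~\ref{thm: complementation}; indecomposability is complementation-invariant because $\overline{P_4}\cong P_4$ and $\overline{C_4}\cong 2K_2$ make Lemma~\ref{lem: A4struc} self-complementary). So assume case (c): $G$ has a weakly isolated vertex $v$. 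Then $G'=G-v$ is weakly threshold with minimum degree still $1$ and, by the difference computation in the proof of Theorem~\ref{thm: char via Construction Algorithm}, the same difference list as $G$; since $G$ is indecomposable it has no interior zero, so neither does $G'$, and the criterion of the previous paragraph makes $G'$ indecomposable. As $|V(G')|\ge 4$, the induction hypothesis builds $G'$ from $P_4$ by Type~2 operations, and adding $v$ back (a Type~2 operation) does the same for $G$. The easier implication runs the same machinery in reverse: $K_1$ and $P_4$ qualify, and any graph built from $P_4$ by Type~2 operations is weakly threshold by Theorem~\ref{thm: char via Construction Algorithm} and keeps minimum degree $1$ and a difference list $(1,\dots,1,0)$, so the criterion again makes it indecomposable.

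I expect the main obstacle to be the first implication: that a lone interior zero of the \emph{degree sequence} forces a composition of the \emph{given} graph. The equality analysis of Theorem~\ref{thm: Erdos Gallai} above is exactly what bridges this gap, since it pins down the adjacencies of $G$ across the cut after the $j$-th vertex (a clique on the top $j$ vertices, and the dichotomy governing the lower vertices) tightly enough to exhibit the splitting partition. A secondary subtlety needing care is that the difference list is blind to isolated vertices, so the hypothesis of having none cannot be dropped from the criterion and must be verified to persist under the deletion of $v$ in the inductive step.
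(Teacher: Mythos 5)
Your proposal is correct, but it takes a genuinely different route from the paper. The paper's forward direction works with construction sequences: using Theorem~\ref{thm: char via Construction Algorithm} and Lemma~\ref{lem: weaklies placement} it arranges the operations so that a Type 3 operation separates any Type 1 operation from a later Type 2 operation, and then argues by induction on the number of operations that any Type 1 or Type 3 operation (or a $K_1$ start with more than one operation) forces a dominating vertex, isolated vertex, or semi-joined $P_4$ and hence an explicit composition; its backward direction shows that the vertex $u$ added by a Type 2 operation lies in an induced $P_4$ meeting $G-u$ and invokes the $A_4$-structure theorem (Lemma~\ref{lem: A4struc}) to transfer indecomposability from $G-u$ to $G$. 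You instead isolate a purely numerical criterion: for a weakly threshold graph with no isolated vertex, canonical indecomposability is equivalent to the difference list $(\Delta_1,\dots,\Delta_m)$ having no interior zero. You prove one half via the equality analysis of the Erd\H{o}s--Gallai bound (an interior zero at $j$ forces the top $j$ vertices to form a clique joined completely to the high-degree remainder, with the low-degree remainder having all neighbors up top, yielding the explicit splitting $(G[A_1\cup B_1],A_1,B_1)\circ G[Y]$ with $Y\neq\emptyset$), and the other half via Theorem~\ref{thm: concatenate EG diffs} together with the splittance computation $\Delta_{m_i}=0$; both implications of the lemma then become mechanical inductions using Lemma~\ref{lem: max min degrees}, the difference computations from the proof of Theorem~\ref{thm: char via Construction Algorithm}, and complementation. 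Your route avoids Lemma~\ref{lem: weaklies placement} and Lemma~\ref{lem: A4struc} entirely, replaces the induction the paper labels ``straightforward'' with a concrete decomposition argument, and yields as a byproduct the fact --- in the spirit of the paper's citation of~\cite{HeredUniII} --- that indecomposability of a weakly threshold graph is detectable from its degree sequence alone; the paper's route is shorter on the page because it reuses machinery already built in Sections~\ref{sec: iter const} and~\ref{sec: forb subgr}.

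One small repair is needed. Your claim that a graph with no isolated vertex has no independent-singleton canonical component is false as stated: $P_3=(0;)\circ(;0)\circ K_1$ has no isolated vertex but has an independent singleton as its middle component, so it is not true that ``every component contributes a nonempty block.'' What your argument actually requires is only that the \emph{leftmost} component is not $(;0)$ --- which does follow from the absence of isolated vertices, since a leftmost independent singleton is adjacent to nothing in the composition --- so that its block is nonempty and ends in $0$, together with the fact that at least one difference follows that block, which is automatic because the rightmost component always contributes at least one difference ($m(\alpha_0)\geq 1$ for any degree sequence). With that adjustment your criterion, and the rest of your proof, stands.
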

\begin{proof}
Suppose that $G$ is an indecomposable weakly threshold graph. By Theorem~\ref{thm: char via Construction Algorithm} and Lemma~\ref{lem: weaklies placement} we may assume that there exists a sequence $O_1,\dots,O_p$ of operations, each of which is Type 1, Type 2, or Type 3, that constructs $G$ from $K_1$ or $P_4$, where a Type 3 operation occurs between any Type 1 operation that is followed later in the sequence by a Type 2 operation. It is straightforward to see by induction on $p$ that if $G$ is constructed from $K_1$ and $p>1$, or if $G$ is constructed from $P_4$ and any of the $O_i$ is a Type 1 or Type 3 operation, then $G$ is canonically decomposable, a contradiction. Thus $O_1,\dots,O_p$ is a sequence of Type 2 operations that construct $G$ from $P_4$.

Conversely, noting that $K_1$ is an indecomposable weakly threshold graph, suppose that $G$ is constructed from $P_4$ via a sequence $O_1,\dots,O_p$ of Type 2 operations. By Theorem~\ref{thm: char via Construction Algorithm} we see that $G$ is a weakly threshold graph. We now prove by induction on $p$ that $G$ is indecomposable. This is true if $p=0$, since $G$ is then isomorphic to $P_4$. Suppose that all graphs constructed from $P_4$ via $k$ Type 2 operations are indecomposable, where $k$ is some nonnegative integer, and suppose that $p=k+1$. Now consider the vertex $u$ added during the operation $O_{p}$. If $u$ is a weakly dominating vertex, then there is a vertex $v$ of degree 1 not adjacent to $u$. Let $w$ be the neighbor of $v$. Since $u$ is weakly dominating, $u$ is adjacent to $w$, and since $u$ has a degree at least as large as $w$, while $w$ has a neighbor that $u$ does not, $u$ must have a neighbor $x$ that $w$ does not. Recalling Theorem~\ref{thm: forb subgr}, we conclude that the subgraph of $G$ induced on $\{u,v,w,x\}$ is isomorphic to $P_4$. By assumption the graph $G-u$ is canonically indecomposable; it follows from Lemma~\ref{lem: A4struc} that $G$ is indecomposable as well. A similar argument holds if $u$ is a weakly isolated vertex, so by induction we conclude that $G$ must be an indecomposable weakly threshold graph.
\end{proof}

In preparation for Lemma~\ref{lem: module cases}, we now present structural results on modules in canonically indecomposable split graphs. A \emph{module} in a graph $G$ is a set $M$ of vertices of $G$ such that for each $v \in V(G)-M$, the vertex $v$ is adjacent to either all or none of the vertices of $M$. The module $M$ is \emph{proper} if $M \neq V(G)$.

A well known theorem of Gallai~\cite{Gallai67} states that if a graph is neither disconnected nor the complement of a disconnected graph, then its maximal proper modules are disjoint; this result is the foundation for what is known as the \emph{modular decomposition} of a graph. The modular and canonical decompositions of a graph are usually distinct, though one connection is easy to verify: if $G$ has canonical decomposition $(G_k,A_k,B_k)\circ \dots \circ (G_1,A_1,B_1) \circ G_0$, then each set of the form $\bigcup_{i=0}^j V(G_i)$ is a module.

Despite this connection and our interest in modules in the results of this section, in what follows we will not refer to the modular decomposition of $G$, other than a quick application of Gallai's result during the proof of Lemma~\ref{lem: modules and twin sets}. As in previous sections, the terms `decomposable' and `indecomposable' will refer solely to the canonical decomposition.

\begin{lem}\label{lem: max modules}
If $G$ is a canonically indecomposable split graph, then every maximal proper module $M$ of $G$ lies within the maximum clique or within the maximum independent set of $G$. \end{lem}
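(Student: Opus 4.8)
The plan is to show that a proper module of $G$ meeting both parts of a split partition would exhibit $G$ as a nontrivial composition, and to choose that partition so its parts are exactly a maximum clique and a maximum independent set. First I would fix a split partition $(A,B)$ of $G$, with $A$ independent and $B$ a clique, arranged so that $B$ is a maximum clique and $A$ a maximum independent set. (The case $|V(G)|=1$ is vacuous, since $K_1$ has no proper nonempty module, so assume $n=|V(G)|\geq 2$.) Such a partition exists: any partition gives $|A|+|B|=n$ while $|A|\leq\alpha(G)$ and $|B|\leq\omega(G)$, so $\alpha(G)+\omega(G)\geq n$; and if this were strict, then choosing a maximum clique $K$ and a maximum independent set $I$, the facts $|K\cap I|\leq 1$ (a clique and an independent set share at most one vertex) and $|K\cup I|\leq n$ force $K\cap I=\{v\}$ and $K\cup I=V(G)$ for a single vertex $v$. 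Then $N(v)=K\setminus\{v\}$, whence $G=(G-v,\,I\setminus\{v\},\,K\setminus\{v\})\circ K_1$, contradicting indecomposability. Thus $\alpha(G)+\omega(G)=n$, and then every split partition has $|A|=\alpha(G)$ and $|B|=\omega(G)$, so both parts are of maximum size.

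With such a partition fixed, the heart of the argument is to suppose toward a contradiction that some proper module $M$ meets both $A$ and $B$, and to pick $a\in M\cap A$ and $b\in M\cap B$. For any $v\in A\setminus M$, independence of $A$ gives $va\notin E(G)$, so since $M$ is a module $v$ is adjacent to no vertex of $M$; for any $v\in B\setminus M$, the clique $B$ gives $vb\in E(G)$, so $v$ is adjacent to every vertex of $M$. Hence the edges of $G$ between $M$ and $V(G)\setminus M$ are precisely all pairs joining $B\setminus M$ to $M$. Writing $H=G-M$ with its induced partition $(A\setminus M,\,B\setminus M)$ and letting $K$ be the subgraph induced on $M$, this says exactly that $G=(H,\,A\setminus M,\,B\setminus M)\circ K$. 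Since $M$ is nonempty and proper, both $H$ and $K$ have nonempty vertex sets, so $G$ is decomposable, a contradiction. Therefore no proper module meets both $A$ and $B$, and in particular every maximal proper module of $G$ lies within the maximum independent set $A$ or within the maximum clique $B$.

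The step I expect to be the main obstacle is the partition bookkeeping in the first paragraph, especially ruling out the ``swing vertex'' that appears when $\alpha(G)+\omega(G)$ exceeds $n$; once the partition is fixed, the composition in the second paragraph is essentially forced. The only remaining care is the degenerate case in which $A\setminus M$ or $B\setminus M$ is empty, but since $M$ is proper at least one of these is nonempty, so $(H,\,A\setminus M,\,B\setminus M)$ is a legitimate splitted graph and the argument goes through unchanged.
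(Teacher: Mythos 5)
Your proof is correct and follows essentially the same route as the paper: the heart of both arguments is that a proper module $M$ meeting both sides of the split partition forces every vertex of $B\setminus M$ to be joined to all of $M$ and every vertex of $A\setminus M$ to none of it, exhibiting $G=(G-M,\,A\setminus M,\,B\setminus M)\circ G[M]$ and contradicting indecomposability. Your first paragraph additionally justifies something the paper leaves implicit---that the maximum clique and maximum independent set of an indecomposable split graph genuinely partition $V(G)$---by ruling out $\alpha(G)+\omega(G)=n+1$ with the swing-vertex composition $(G-v,\,I\setminus\{v\},\,K\setminus\{v\})\circ K_1$, which is a worthwhile extra degree of rigor.
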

\begin{proof}
Let $G$ be a canonically indecomposable split graph with maximum clique $Q$ and maximum independent set $I$, and suppose to the contrary that there exists a maximal proper module $M$ such that $M$ includes a vertex $x$ from $Q$ and a vertex $y$ from $I$. Note that every vertex from $Q - M$ is adjacent to $x$ and hence must be adjacent to $y$, and every vertex from $I-M$ is non-adjacent to $y$ and hence must be non-adjacent to $x$. However, then we may write $(G,I,Q) = (G-M, I-M,Q-M) \circ (G[M], I \cap M, Q \cap M)$, which is a contradiction, since $M$ is a nonempty proper module and $G$ is indecomposable.
\end{proof}

Two or more pairwise nonadjacent vertices in a graph are \emph{twins} if they have the same neighbors. Two or more pairwise adjacent vertices in a graph are \emph{clones} if they have the same closed neighborhoods.

\begin{lem}\label{lem: modules and twin sets}
If $G$ is a canonically indecomposable split graph, then its maximal proper modules are disjoint, and two vertices belong to the same maximal proper module if and only if they are twins or clones.
\end{lem}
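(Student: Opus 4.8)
The plan is to prove the two assertions separately, using Lemma~\ref{lem: max modules} together with Gallai's theorem~\cite{Gallai67} recalled just above. First I would dispose of the case $G\cong K_1$, where there are no nonempty proper modules and both assertions hold vacuously; so I may assume $G\not\cong K_1$, whence $G$ has at least four vertices by Lemma~\ref{lem: A4struc}. For the disjointness claim I would apply Gallai's theorem, which requires that $G$ be neither disconnected nor the complement of a disconnected graph. The key elementary fact I would use is that a disconnected split graph always has an isolated vertex: the clique side of any splitting partition lies in a single component, so every other component consists of pairwise nonadjacent vertices and hence is a single vertex. An isolated vertex $v$ of $G$ would give the composition $(\{v\},\{v\},\emptyset)\circ(G-v)$, contradicting indecomposability, so $G$ has no isolated vertex and is therefore connected. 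Dually, a dominating vertex would give $(\{v\},\emptyset,\{v\})\circ(G-v)$, so $G$ has none; since a dominating vertex of $G$ is an isolated vertex of the split graph $\overline G$, the same fact makes $\overline G$ connected. Gallai's theorem then yields that the maximal proper modules of $G$ are pairwise disjoint, so each vertex lies in a unique one.

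For the forward direction of the equivalence, I would suppose $u$ and $v$ lie in a common maximal proper module $M$ and invoke Lemma~\ref{lem: max modules} to place $M$ inside either the maximum clique $Q$ or the maximum independent set $I$. If $M\subseteq I$, then $u$ and $v$ are nonadjacent and have no neighbors inside $M$, while the module condition forces every vertex outside $M$ to be adjacent to both or to neither; hence $N(u)=N(v)$ and $u,v$ are twins. If $M\subseteq Q$, then $u$ and $v$ are adjacent and adjacent to all of $M$, and the same reasoning applied to closed neighborhoods gives $N[u]=N[v]$, so $u,v$ are clones.

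For the converse I would argue directly from the definitions: if $u$ and $v$ are twins or clones, then every other vertex is adjacent to both or to neither of them, so $\{u,v\}$ is a module, and it is proper since $G$ has at least four vertices. Taking a largest proper module containing $\{u,v\}$ then produces a maximal proper module $M$ with $u,v\in M$. I expect the main obstacle to be the input to Gallai's theorem --- namely recognizing that indecomposability rules out both isolated and dominating vertices, and converting ``no isolated vertex'' into genuine connectivity for $G$ and for $\overline G$; once that is secured, Lemma~\ref{lem: max modules} and routine neighborhood bookkeeping complete both directions.
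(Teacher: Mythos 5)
Your proposal is correct and follows essentially the same route as the paper: rule out isolated and dominating vertices via indecomposability to get that both $G$ and $\overline{G}$ are connected, apply Gallai's theorem for disjointness, and use Lemma~\ref{lem: max modules} to convert membership in a common maximal proper module into equality of open or closed neighborhoods. The only cosmetic difference is that the paper deduces connectivity from $2K_2$-freeness of split graphs while you argue directly from the splitting partition, and you spell out the (easy) converse that the paper dismisses as clear.
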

\begin{proof}
Suppose that $G$ is canonically indecomposable and split. Since $G$ is split, it and its complement are both $2K_2$-free (see~\cite{FoldesHammer76}). This implies that if $G$ is disconnected, then all but one of the components are isolated vertices, which contradicts the indecomposability of $G$. A similar contradiction arises if $G$ is the complement of a disconnected graph, so both $G$ and its complement are connected. The modular decomposition theorem of Gallai mentioned previously then implies that the maximal proper modules of $G$ are disjoint.

It is clear that if two vertices are twins or clones, then they belong to the same maximal proper module. By Lemma~\ref{lem: max modules}, these modules lie within the maximum clique $Q$ or within the maximum independent set $I$ of $G$. If two vertices belong to the same maximal proper module $M$, then they have the same open or closed neighborhood, depending on whether $M$ is a subset of $I$ or of $M$, respectively, and hence the vertices are twins or clones.
\end{proof}

We will use Lemmas~\ref{lem: max min degrees} and~\ref{lem: modules and twin sets} multiple times without mention in proving the next lemma.

\begin{lem} \label{lem: module cases}
If $G$ is an indecomposable weakly threshold graph with five or more vertices, then exactly one of the following is true of $G$:
\begin{enumerate}
\item[\textup{(i)}] there is a unique vertex $u$ of maximum degree, and the vertices of minimum degree comprise a maximal proper module with at least two vertices, each of which is adjacent to $u$;
\item[\textup{(ii)}] there is a unique vertex $u$ of maximum degree, and the vertices of minimum degree belong to exactly two distinct maximal proper modules, one of which has size one and contains the unique vertex $v$ not adjacent to $u$;
\item[\textup{(iii)}] there is a unique vertex $v$ of minimum degree, and the vertices of maximum degree comprise a maximal proper module with at least two vertices, each of which is non-adjacent to $v$;
\item[\textup{(iv)}] there is a unique vertex $v$ of minimum degree, and the vertices of maximum degree belong to exactly two distinct maximal proper modules, one of which has size one and contains the unique vertex $u$ adjacent to $v$.
\end{enumerate}
\end{lem}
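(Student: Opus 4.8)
The plan is to combine the iterative construction of Lemma~\ref{lem: constructing indecomp WT} with the module structure of Lemma~\ref{lem: modules and twin sets}. By Lemma~\ref{lem: constructing indecomp WT}, an indecomposable weakly threshold graph $G$ on five or more vertices is built from $P_4$ by a nonempty sequence of Type 2 operations; the last such operation adds either a weakly dominating or a weakly isolated vertex. I would first observe that by Theorem~\ref{thm: complementation} (and the fact that complementation swaps weakly dominating with weakly isolated vertices), cases (i)--(ii) and (iii)--(iv) are interchanged under complementation, so it suffices to analyze the situation where the last operation added a weakly isolated vertex and then dualize. Before the case split, I would establish the degree framework using Lemma~\ref{lem: max min degrees}: since $G$ is indecomposable it has neither a dominating nor an isolated vertex nor a semi-joined $P_4$, so exactly case (c) or (d) of that lemma holds, giving $d_1 = n-2$ and $d_n = 1$. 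Thus the maximum degree is $n-2$ and the minimum degree is $1$.

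Next I would pin down the relevant vertices. In case (c) we have $d_2 < d_1 = n-2$, so there is a \emph{unique} vertex $u$ of maximum degree; in case (d) we have $d_{n-1} > d_n = 1$, so there is a \emph{unique} vertex $v$ of minimum degree. These are precisely the alternatives giving (i)--(ii) versus (iii)--(iv). Focus on case (c), where $u$ is the unique maximum-degree vertex and there are two or more vertices of degree $1$. By Lemma~\ref{lem: modules and twin sets}, two vertices lie in the same maximal proper module exactly when they are twins or clones; since the degree-$1$ vertices lie in the independent set, I would check that any two of them are twins precisely when their single neighbors coincide. The single non-neighbor of $u$ (which exists since $d_1=n-2$), if it has degree $1$, is the one degree-$1$ vertex \emph{not} adjacent to $u$, and so cannot be a twin of any degree-$1$ vertex adjacent to $u$. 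This forces the dichotomy: either every degree-$1$ vertex is adjacent to $u$, in which case they are mutual twins and form a single module of size $\geq 2$ (case (i)), or exactly one degree-$1$ vertex fails to be adjacent to $u$, giving two distinct modules with the exceptional one of size one (case (ii)).

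**The main obstacle** I anticipate is verifying that in case (c) the degree-$1$ vertices really do split into at most two twin-classes, and that the ``exceptional'' non-neighbor of $u$ genuinely has degree $1$ and is alone in its module. To handle this I would argue that all degree-$1$ vertices adjacent to $u$ must share the \emph{same} neighbor, namely $u$, hence are mutually twins (forming one module); the only way a degree-$1$ vertex can lie outside this module is to have its unique neighbor be some vertex other than $u$, and I would show using the forbidden-subgraph characterization (Theorem~\ref{thm: forb subgr}, in particular $2K_2$-freeness together with the split structure) that at most one such vertex can occur and that it must be the unique non-neighbor of $u$. The mutual exclusivity of (i)--(iv) then follows from the exclusivity of cases (c) and (d) in Lemma~\ref{lem: max min degrees} together with the disjointness of maximal proper modules from Lemma~\ref{lem: modules and twin sets}. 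Finally, I would obtain (iii) and (iv) by applying the established dichotomy to $\overline{G}$, which is again an indecomposable weakly threshold split graph by Theorem~\ref{thm: complementation}, and translating maximum-degree data in $\overline{G}$ into minimum-degree data in $G$.
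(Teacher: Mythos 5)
Your proposal is correct, but it takes a genuinely different route from the paper. The paper proves this lemma by induction on the number of Type~2 operations furnished by Lemma~\ref{lem: constructing indecomp WT}: it takes the last-added vertex $w$, reduces by complementation to the case that $w$ is weakly isolated, and checks how each of the four cases for $G-w$ (which holds by the induction hypothesis) transforms into a case for $G$. You instead give a direct, non-inductive structural argument: Lemma~\ref{lem: max min degrees} together with indecomposability (which rules out cases (a), (b), (e), since a dominating vertex, isolated vertex, or semi-joined $P_4$ on $n\geq 5$ vertices forces decomposability) pins the graph into case (c) or (d), and then the twin/clone characterization of Lemma~\ref{lem: modules and twin sets} immediately sorts the degree-$1$ vertices into at most two maximal proper modules, the key point being that a degree-$1$ vertex non-adjacent to $u$ must be the \emph{unique} non-neighbor of $u$ (as $\deg u = n-2$), so at most one exceptional vertex exists and it can have neither twins nor clones. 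Your approach is shorter and makes Lemma~\ref{lem: constructing indecomp WT} unnecessary for this lemma (your opening appeal to it, and to the ``last operation,'' is vestigial --- the real case split is (c) versus (d)); the paper's inductive framing has the advantage of rehearsing exactly the case-transition analysis (how adding a weakly isolated or weakly dominating vertex moves a graph between cases (i)--(iv)) that is reused in the proof of the recurrence in Theorem~\ref{thm: b recurrence}. Two small notes: your invocation of $2K_2$-freeness and Theorem~\ref{thm: forb subgr} to bound the number of exceptional degree-$1$ vertices is not needed (and as sketched would not quite work, since in a split graph the neighbors of two pendant vertices are adjacent); the counting argument you already give --- $u$ has exactly one non-neighbor --- suffices. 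Also, in case (ii) you should say explicitly that the exceptional vertex's module is a singleton because any twin of it would be another degree-$1$ vertex missing $u$ (impossible) and any clone would force its unique neighbor $w$ to have degree $1$, contradicting that $w$ is adjacent to both $v$ and $u$; you gesture at this and it completes correctly.
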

\begin{proof}
Let $G$ be an indecomposable weakly threshold graph with five or more vertices. By Lemma~\ref{lem: constructing indecomp WT}, $G$ may be constructed from an induced subgraph isomorphic to $P_4$ by iteratively applying a sequence $O_1,\dots,O_p$ of Type 2 operations.  We proceed by induction on the $p$. Observe that if $p\geq 1$, since $G$ has at least five vertices, and if $p=1$, then $G$ is isomorphic to the chair or kite graph, which respectively satisfy cases (ii) and (iv) of the claim.

Assume now that the claim holds for all indecomposable weakly threshold graphs constructed from $P_4$ via $k$ operations of Type 2, where $k$ is some natural number, and suppose that $p=k+1$. Let $w$ be the vertex added during the operation $O_p$. Note that graph complementation preserves modules and the properties of being indecomposable, split, and a weakly threshold graph. Furthermore, under graph complementation weakly dominating vertices become weakly isolated vertices, and vice versa. Thus we may replace $G$ by its complement if desired and assume that the vertex $w$ is a weakly isolated vertex. Let $G' = G-w$. By the induction hypothesis, $G'$ is described by one of the statements (i)--(iv). 

We consider each of those cases in turn. If (i) holds for $G'$, then $G$ has a unique vertex of maximum degree, and $w$ is a twin of vertices having minimum degree in $G'$, creating a larger such module in $G$; hence (i) holds for $G$.

If (ii) holds for $G'$, then $G$ again has a unique vertex $u$ of maximum degree, and $w$ is a twin of the vertices of minimum degree adjacent to $u$, preserving the module of size one containing the vertex not adjacent to $u$; hence (ii) holds for $G$.

If (iii) holds for $G'$, then the addition of $w$ to $G'$ creates exactly two distinct maximal proper modules in $G$, each with just one vertex, that consist of vertices of minimum degree. Furthermore, the neighbor of $w$ is the unique vertex of maximum degree in $G$; hence (ii) holds for $G$.

Finally, if (iv) holds for $G'$, then let $v$ be the vertex of minimum degree in $G'$. In $G$, either $v$ and $w$ are twins, in which case (i) holds for $G$, or $v$ and $w$ have different neighbors, in which case (ii) holds for $G$.
\end{proof}

Recall that for $n \geq 4$, $h_n$ denotes the number of indecomposable weakly threshold graphs on $n$ vertices. Lemma~\ref{lem: module cases} now allows us to derive a recurrence relation for the terms $h_n$.

\begin{thm}\label{thm: b recurrence}
For all $n \geq 7$, we have $h_n = 3h_{n-1} - h_{n-2}$.
\end{thm}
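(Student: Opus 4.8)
The plan is to refine the count $h_n$ according to the four mutually exclusive cases of Lemma~\ref{lem: module cases} and to track how these cases behave under the single Type 2 operation that, by Lemma~\ref{lem: constructing indecomp WT}, builds every indecomposable weakly threshold graph from one on one fewer vertex. For $n \geq 5$ let $p_n$ be the number of indecomposable weakly threshold graphs satisfying case (i) and $q_n$ the number satisfying case (ii). Because complementation interchanges weakly dominating and weakly isolated vertices and hence (by Theorem~\ref{thm: complementation} and the way complementation permutes the case list, $(\text{i})\leftrightarrow(\text{iii})$ and $(\text{ii})\leftrightarrow(\text{iv})$) interchanges the corresponding families, the numbers of case-(iii) and case-(iv) graphs are again $p_n$ and $q_n$, so that $h_n = 2p_n + 2q_n$. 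The goal is to prove the two recurrences $p_n = p_{n-1}+q_{n-1}$ and $q_n = p_{n-1}+2q_{n-1}$, from which the theorem follows algebraically.

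First I would determine, for a source graph $G'$ on $n-1$ vertices in each case, how many non-isomorphic graphs arise by adding a weakly isolated vertex (equivalently, how many ways there are to choose its unique neighbour among the maximum-degree vertices of $G'$, up to the module structure of Lemma~\ref{lem: modules and twin sets}) together with the case of each output. The argument inside the proof of Lemma~\ref{lem: module cases} already yields the weakly-isolated transitions $(\text{i})\to(\text{i})$, $(\text{ii})\to(\text{ii})$, $(\text{iii})\to(\text{ii})$, and $(\text{iv})\to(\text{i}),(\text{ii})$; the weakly-dominating transitions are the complementary images of these. Since the maximum-degree vertices form a single module except in case (iv), where they split into the singleton $\{u\}$ and one further module, a case-(iv) source produces two distinct graphs under weakly-isolated additions while every other case produces one (and dually for weakly-dominating additions). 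Summing over all sources, the number of (source, operation) pairs producing a case-(i) target is $p_{n-1}+q_{n-1}$, and the number producing a case-(ii) target is $p_{n-1}+2q_{n-1}$.

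The crux is then to show that for $n \geq 7$ each target graph is produced by exactly one such pair, i.e.\ that it admits a unique reduction. For a case-(i) or case-(ii) target $G$, the weakly isolated vertices are precisely the degree-one vertices adjacent to the unique maximum-degree vertex $u$, and these comprise a single module; deleting any one of them yields the same indecomposable source, giving exactly one weakly-isolated reduction. No weakly-dominating reduction exists: in case (i) the vertex $u$ is not weakly dominating, as its unique non-neighbour does not have minimum degree, while in case (ii) the only candidate is $u$, whose deletion isolates the minimum-degree neighbours of $u$ and hence yields a decomposable graph that is not an admissible source. With multiplicity one established, the production counts equal the target counts, giving $p_n = p_{n-1}+q_{n-1}$ and $q_n = p_{n-1}+2q_{n-1}$.

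Finally I would combine these. One obtains $h_n = 2p_n + 2q_n = 4p_{n-1}+6q_{n-1}$, while $3h_{n-1} = 6p_{n-1}+6q_{n-1}$, so $3h_{n-1}-h_n = 2p_{n-1}$. Applying the $p$-recurrence at index $n-1$ gives $p_{n-1}=p_{n-2}+q_{n-2}$, whence $2p_{n-1} = 2p_{n-2}+2q_{n-2} = h_{n-2}$ and therefore $h_n = 3h_{n-1}-h_{n-2}$. The need to invoke the $p$-recurrence at index $n-1$, which requires $n-1 \geq 6$, is exactly what forces the hypothesis $n \geq 7$ (and indeed the identity fails at $n=6$ because $P_4$ does not fit the case analysis). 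I expect the main obstacle to be the multiplicity-one claim: ruling out the spurious weakly-dominating reduction and confirming that the surviving weakly-isolated reduction remains indecomposable for every $n \geq 7$, rather than the bookkeeping of the transition table, which is routine once the reductions are pinned down.
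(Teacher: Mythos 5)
Your proposal is correct and follows essentially the same route as the paper: the paper groups the four cases of Lemma~\ref{lem: module cases} into $b_n$ (cases (i) and (iii)) and $a_n = h_n - b_n$, establishes $h_n = 2b_{n-1} + 3a_{n-1}$ by exactly your production count over Type 2 operations, and establishes $b_n = h_{n-1}$ by a twin-creating bijection that is precisely your recurrence $p_n = p_{n-1} + q_{n-1}$ (doubled, via the complementation symmetry you invoke to identify the counts of cases (iii) and (iv) with those of (i) and (ii)). Your $(p_n, q_n)$ transfer-matrix bookkeeping and your explicit unique-reduction step are a mild reorganization of the same argument---same case analysis, same counting, same range restriction $n \geq 7$---with your write-up being somewhat more explicit than the paper's about why each target graph is produced by exactly one (source, operation) pair.
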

\begin{proof}
For each $n \geq 5$, let $b_n$ denote the number of indecomposable weakly threshold graphs $G$ with $n$ vertices in which the vertices of maximum degree comprise a single module of $G$, and the vertices of minimum degree likewise comprise a single module in $G$. Observe that $b_n$ counts the number of graphs with $n$ vertices that are described in Lemma~\ref{lem: module cases} in cases (i) and (iii). Further define $a_n = h_n - b_n$ for each integer $n \geq 5$.

It follows from Lemma \ref{lem: module cases} that the indecomposable weakly threshold graphs on $n \geq 5$ vertices can each be obtained by adding a weakly dominating vertex or a weakly isolated vertex to an indecomposable weakly threshold graph on $n-1$ vertices. Furthermore, since adding a weakly isolated vertex to a weakly threshold graph creates a graph with at least two vertices of degree 1 and a single vertex of maximum degree, and adding a weakly dominating vertex to a weakly threshold graph creates a graph with at least two vertices of degree $n-2$ and a unique vertex of degree 1, it will never be the case that we can obtain the same indecomposable weakly threshold graph on $n$ vertices by adding a weakly isolated vertex to one weakly threshold graph and adding a weakly dominating vertex to another.

Furthermore, we can determine the number of distinct isomorphism classes that are represented by graphs produced by adding a weakly isolated or weakly dominating vertex to a given indecomposable weakly threshold graph $H$, as we now describe. Our cases come from Lemma~\ref{lem: module cases}.

If $H$ is described by cases (i) or (iii), then up to isomorphism there is one way in which a weakly isolated vertex can be added to $H$, and exactly one way in which a weakly dominating vertex can be added.

If $H$ is described by case (ii), then there is one way to add a weakly isolated vertex, and up to isomorphism there are two ways to add a weakly dominating vertex---we may make the new vertex the clone of an already-existing vertex of maximum degree, or we may make the new weakly dominating vertex non-adjacent to a vertex of minimum degree adjacent to the vertex of maximum degree in $H$. Note that these two ways produce graphs in cases (iii) and (iv), respectively, which hence cannot be isomorphic. For similar reasons, if $H$ is described by case (iv), then there is one way to add a weakly dominating vertex and two ways (up to isomorphism) to add a weakly isolated vertex. 

Thus $h_{n} = 2b_{n-1} + 3a_{n-1}$ for all $n \geq 6$. We now show that $b_n= h_{n-1}$ for all $n \geq 6$. Indeed, note that $b_{n}$ counts the number of $n$-vertex graphs described in cases (i) and (iii). There is a bijection between $n$-vertex graphs satisfying (i) and graphs on $n-1$ vertices satisfying (i) or (iv), given by identifying in the smaller graph a vertex of minimum degree whose neighbor belongs to a maximal proper module of size 1 and then creating a twin for the vertex of minimum degree. There is a similar bijection between $n$-vertex graphs satisfying (iii) and graphs on $n-1$ vertices satisfying (ii) or (iii), completing our proof that $b_n= h_{n-1}$ for all $n \geq 6$.

Now for $n \geq 7$ we can now conclude that \[h_n = 2b_{n-1} + 3a_{n-1} = 2h_{n-2} + 3(h_{n-1}-h_{n-2}) = 3h_{n-1} - h_{n-2}. \qedhere\]
\end{proof}

Using standard techniques, from the recurrence for $h_n$ and the observed values $h_4=1$, $h_5=2$, and $h_6=6$, we can now derive a closed form expression for $H(x)$, obtaining \[H(x) = 2x + \frac{x^4-x^5+x^6}{1-3x+x^2} = \frac{2x-6x^2+2x^3+x^4-x^5+x^6}{1-3x+x^2}.\]

\subsection{General weakly threshold sequences and graphs} \label{subsec: conclusion}

Having obtained expressions for $G(x)$ and $H(x)$, we can now substitute them into equations~\eqref{eq: S(x)} and~\eqref{eq: W(x)} to obtain $S(x)$ and $W(x)$.

\begin{thm} \label{thm: gen fcns}
The generating function for the  weakly threshold sequences, indexed by the number of terms, is \[S(x) = \frac{x-x^2-x^3}{1-3x+x^2+x^3} = -1 + \frac{1}{2(1-x)}+\frac{1-x}{2(1-2x-x^2)}.\] The generating function for the weakly threshold graphs, indexed by the number of vertices, is \[W(x) = \frac{x-2x^2-x^3-x^5}{1-4x+3x^2+x^3+x^5} = -1 + \frac{2}{3(1-x-x^2)} + \frac{1-2x}{3(1-3x+x^2-x^3)}.\]
\end{thm}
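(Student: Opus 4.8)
The plan is to prove the theorem by direct substitution and algebraic simplification, handling the sequence and graph generating functions separately but by an identical method: in each case I first reduce the composite rational function given by the formula to the stated single-fraction form, and then pass to the partial-fraction form by factoring the denominator.

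For the sequence generating function I would substitute the expression for $G(x)$ from~\eqref{eq: G(x)} into~\eqref{eq: S(x)}. Clearing the common denominator $1-2x$ from both $G(x)-x$ and $1-G(x)$ gives $S(x) = (x-2x^2+x^4)/(1-4x+4x^2-x^4)$. The key observation is that numerator and denominator share a factor of $1-x$: one checks by expansion that $x-2x^2+x^4 = x(1-x)(1-x-x^2)$ and $1-4x+4x^2-x^4 = (1-x)(1-3x+x^2+x^3)$. Cancelling $1-x$ yields the first displayed form $S(x)=(x-x^2-x^3)/(1-3x+x^2+x^3)$. For the second form I would factor the denominator as $1-3x+x^2+x^3 = (1-x)(1-2x-x^2)$ and apply partial fractions over these two factors; since numerator and denominator both have degree $3$, the decomposition carries the constant term $-1$, the ratio of the leading coefficients of $-x^3$ and $x^3$. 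Rather than solving for unknown coefficients, the cleanest route is to verify the claimed expression directly by placing $-1+\tfrac{1}{2(1-x)}+\tfrac{1-x}{2(1-2x-x^2)}$ over the common denominator $2(1-x)(1-2x-x^2)$ and confirming that the numerators agree.

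The graph case proceeds identically but with longer polynomials. Substituting the expression for $H(x)$ into~\eqref{eq: W(x)} and clearing the common factor $1-3x+x^2$ gives $W(x) = (x-3x^2+x^3+x^4-x^5+x^6)/(1-5x+7x^2-2x^3-x^4+x^5-x^6)$. Again $1-x$ divides both: multiplying the claimed numerator $x-2x^2-x^3-x^5$ and denominator $1-4x+3x^2+x^3+x^5$ each by $1-x$ reproduces the degree-$6$ polynomials above, so cancelling $1-x$ produces the first displayed form. For the second form, the essential structural fact is the factorization $1-4x+3x^2+x^3+x^5 = (1-x-x^2)(1-3x+x^2-x^3)$; with this in hand one verifies the claimed partial-fraction expression exactly as before, combining $-1+\tfrac{2}{3(1-x-x^2)}+\tfrac{1-2x}{3(1-3x+x^2-x^3)}$ over the common denominator $3(1-x-x^2)(1-3x+x^2-x^3)$ and confirming that the numerator collapses to $3(x-2x^2-x^3-x^5)$.

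Since every step is a polynomial identity, there is no conceptual obstacle, and the only real work is bookkeeping. The step most prone to error is the $W(x)$ computation, where the intermediate polynomials reach degree $6$ before the cancellation of $1-x$ and where the two denominator factorizations $(1-x)(1-2x-x^2)$ and $(1-x-x^2)(1-3x+x^2-x^3)$ must be guessed and then confirmed correctly. Each factorization and each of the two partial-fraction identities is settled by a single expansion, so the cleanest write-up verifies the stated forms directly rather than deriving the decompositions from scratch.
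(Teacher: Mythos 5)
Your proposal is correct and matches the paper's approach: the paper likewise obtains the theorem by substituting the closed forms of $G(x)$ and $H(x)$ from Sections~\ref{subsec: sequences} and~\ref{subsec: graphs} into equations~\eqref{eq: S(x)} and~\eqref{eq: W(x)} and simplifying, leaving the algebra (including the cancellation of the factor $1-x$ and the partial-fraction forms) as routine. All of your stated factorizations and numerator identities check out, so the write-up is sound as is.
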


Using standard techniques, we obtain formulae for the numbers $s_n$ and $w_n$ of weakly threshold sequences and graphs on $n$ vertices.

\begin{thm} \label{thm: gen fcns}
For integers $n \geq 1$,
\[s_n = \frac{2+\left(1+\sqrt{2}\right)^n + \left(1-\sqrt{2}\right)^n}{4},\]
and
\begin{align*}
w_n = &c_1\left(\frac{1+\sqrt{5}}{2}\right)^n + c_2\left(\frac{1-\sqrt{5}}{2}\right)^n\\
&+ c_3\left(\frac{6-(1+i \sqrt{3}) (27-3 \sqrt{57})^{1/3}-(1-i \sqrt{3}) (27+3\sqrt{57})^{1/3}}{6}\right)^n\\
&+ c_4\left(\frac{6-(1-i \sqrt{3}) (27-3 \sqrt{57})^{1/3}-(1+i \sqrt{3}) (27+3\sqrt{57})^{1/3}}{6}\right)^n\\
&+ c_5\left( \frac{3+(27-3 \sqrt{57})^{1/3}+(27+3\sqrt{57})^{1/3}}{3}\right)^n,
\end{align*}

where

\begin{align*}
c_1 &= \frac{\sqrt{5}+1}{3\sqrt{5}}, \qquad c_2 = \frac{\sqrt{5}-1}{3\sqrt{5}},\\
c_3 &= \frac{1}{9}\left(1 - \frac{(1+i \sqrt{3}) (3 \sqrt{57}-19)^{1/3}}{4\cdot 19^{2/3}}+\frac{1-i \sqrt{3}}{2\cdot (19 (3 \sqrt{57}-19))^{1/3}}\right),\\
c_4 &= \frac{1}{9}\left(1 - \frac{(1-i \sqrt{3}) (3 \sqrt{57}-19)^{1/3}}{4\cdot 19^{2/3}}+\frac{1+i \sqrt{3}}{2\cdot (19 (3 \sqrt{57}-19))^{1/3}}\right), \text{ and}\\
c_5 &= \frac{1}{9}\left(1+\frac{(3 \sqrt{57}-19)^{1/3}}{2 \cdot 19^{2/3}}- \frac{1}{(19 (3 \sqrt{57}-19))^{1/3}}\right).
\end{align*}
\end{thm}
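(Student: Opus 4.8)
The plan is to read off $s_n$ and $w_n$ as the Taylor coefficients of the generating functions $S(x)$ and $W(x)$, using the partial-fraction decompositions for these functions established in the preceding theorem. Since each summand there is a constant multiple of a simple rational function of the form $\frac{a}{1-x}$ or $\frac{a+bx}{1-\beta x-\gamma x^2}$, extracting the coefficient of $x^n$ reduces to the standard fact that $[x^n]\frac{1}{1-\lambda x}=\lambda^n$ once each denominator is factored over its (reciprocal) roots.

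For $s_n$, the leading term $-1$ affects only the constant coefficient and may be ignored for $n\geq 1$, while $\frac{1}{2(1-x)}$ contributes $\frac12$ to every coefficient. For the last term I would factor $1-2x-x^2=(1-(1+\sqrt2)x)(1-(1-\sqrt2)x)$ and compute the partial-fraction numerators of $\frac{1-x}{2(1-2x-x^2)}$ over these two linear factors; a short calculation shows each numerator equals $\frac14$, so this term contributes $\frac14\big((1+\sqrt2)^n+(1-\sqrt2)^n\big)$. Summing the two contributions gives exactly $s_n=\frac{2+(1+\sqrt2)^n+(1-\sqrt2)^n}{4}$.

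For $w_n$ the same method applies term by term. The term $\frac{2}{3(1-x-x^2)}$ has denominator with reciprocal roots $\frac{1\pm\sqrt5}{2}$, and its partial-fraction numerators are precisely the constants $c_1$ and $c_2$ multiplying $\left(\frac{1\pm\sqrt5}{2}\right)^n$. The term $\frac{1-2x}{3(1-3x+x^2-x^3)}$ is the genuinely delicate one: its denominator is cubic, so I would pass to the reciprocal polynomial $\lambda^3-3\lambda^2+\lambda-1$, whose three roots are the exponential bases in the statement. Applying Cardano's formula to this cubic (the depressed form is $t^3-2t-2$, with negative discriminant, yielding one real root and a complex-conjugate pair) produces those bases in terms of the real cube roots $(27\pm 3\sqrt{57})^{1/3}$ and the primitive cube roots of unity $\frac{-1\pm i\sqrt3}{2}$, which is exactly where the displayed radicals originate. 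Writing $\frac{1-2x}{3(1-3x+x^2-x^3)}$ in partial fractions over these three roots and reading off the numerators yields $c_3$, $c_4$, and $c_5$.

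The routine parts---namely $s_n$ and the golden-ratio contribution to $w_n$---present no difficulty. I expect the main obstacle to be the cubic factor of the denominator of $W(x)$: one must carry Cardano's formula through carefully, keeping track of which real cube root pairs with which cube root of unity, and then grind out the three partial-fraction numerators to match the intricate closed forms for $c_3$, $c_4$, and $c_5$. This is a matter of meticulous algebraic bookkeeping rather than a conceptual hurdle, and the final formula can be validated by checking that it reproduces the small initial values of $w_n$ (such as $w_1,\dots,w_6$).
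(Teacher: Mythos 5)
Your proposal is correct and is exactly the ``standard techniques'' the paper invokes without detail: extracting coefficients from the partial-fraction forms of $S(x)$ and $W(x)$ given in the preceding theorem, factoring each denominator over its reciprocal roots (via Cardano for the cubic $\lambda^3-3\lambda^2+\lambda-1$, whose depressed form is indeed $t^3-2t-2$), and reading off the numerators as the constants $c_i$. Your verification plan against the initial values $w_1,\dots,w_6$ is a sensible safeguard for the bookkeeping in $c_3$, $c_4$, $c_5$.
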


We close this section with a few remarks. First, the sequence $(s_n)$ has previously appeared in the Online Encyclopedia of Integer Sequences, matching sequences A171842 and A024537~\cite{OEIS}. The sequence is reported to count several other sets of combinatorial objects; it may be interesting to find correspondences between weakly threshold sequences and these objects.

Next, we recall that the number of threshold graphs on $n$ vertices is precisely $2^{n-1}$, and that threshold graphs are the unique realizations of their degree sequences. In comparison, Theorem \ref{thm: gen fcns} indicates that \begin{equation}
s_n \;\sim\; \frac{1}{4}(1+\sqrt{2})^n \qquad \text{ and } \qquad w_n \;\sim\; c_5\left( \frac{3+(27-3 \sqrt{57})^{1/3}+(27+3\sqrt{57})^{1/3}}{3}\right)^n.
\label{eq: asymptotics}
\end{equation} That there are more weakly threshold graphs of a given order than weakly threshold sequences is not surprising, since adding a weakly isolated vertex (or weakly dominating vertex) to a weakly threshold graph can often involve a choice of the neighborhood of the added vertex, though the resulting degree sequence is the same no matter which choice is made. Approximating in \eqref{eq: asymptotics}, we see that for large $n$, $s_n \geq \frac{1}{4} \cdot 2.4^n$ and $w_n \geq c_5 \cdot 2.7^n$.

\end{document}